\theoremstyle{definition}
\newtheorem{theorem}{Theorem}[section]
\newtheorem{lemma}[theorem]{Lemma}
\newtheorem{definition}[theorem]{Definition}
\newtheorem{notation}[theorem]{Notation}
\numberwithin{equation}{section}
\newcommand{\CE}{\mathcal{E}}
\newcommand{\CR}{\mathcal{R}}
\newcommand{\CY}{\mathcal{Y}}
\newcommand{\CU}{\mathcal{U}}
\newcommand{\CP}{\mathcal{P}}
\newcommand{\CA}{\mathcal{A}}
\renewcommand{\CD}{\mathcal{D}}
\newcommand{\CH}{\mathcal{H}}
\newcommand{\CX}{\mathcal{X}}
\newcommand{\CV}{\mathcal{V}}
\newcommand{\CB}{\mathcal{B}}
\newcommand{\BR}{\mathbb{R}}
\newcommand{\BZ}{\mathbb{Z}}
\newcommand{\BN}{\mathbb{N}}
\newcommand{\abs}[1]{\left|#1\right|}
\newcommand{\fl}[1]{\left\lfloor#1\right\rfloor}
\newcommand{\ce}[1]{\left\lceil#1\right\rceil}
\newcommand{\gs}{\geqslant}
\newcommand{\ls}{\leqslant}
\newcommand{\pt}[1]{\left(#1\right)}
\newcommand{\cb}[1]{\left\{#1\right\}}
\begin{document}
\setcounter{page}{1}

\title{\bf Hilton-Milner theorem for $k$-multisets}

\author[1]{
Jiaqi Liao\thanks{E-mail:\texttt{975497560@qq.com}}}
\author[1]{
Zequn Lv\thanks{E-mail:\texttt{lvzq19@mails.tsinghua.edu.cn}}}
\author[2]{
Mengyu Cao\thanks{Corresponding author. E-mail:\texttt{myucao@ruc.edu.cn}}}
\author[1]{
Mei Lu\thanks{E-mail:\texttt{lumei@mail.tsinghua.edu.cn}}}

\affil[1]{\small Department of Mathematical Sciences, Tsinghua University, Beijing 100084, China}
\affil[2]{\small Institute for Mathematical Sciences, Renmin University of China, Beijing 100086, China}

\date{}
\openup 0.5\jot
\maketitle

\begin{abstract}
	
Let $ k, n \in \mathbb{N}^+ $ and $ m \in \mathbb{N}^+ \cup \{\infty \} $. A $ k $-multiset in $ [n]_m $ is a $ k $-set whose elements are integers from $ \{1, 2, \ldots, n\} $, and each element is allowed to have at most $ m $ repetitions. A family of $ k $-multisets in $ [n]_m $ is said to be intersecting if every pair of $ k $-multisets from the family have non-empty intersection. In this paper, we give the size and structure of the largest non-trivial intersecting family of $ k $-multisets in $ [n]_m $ for $ n \gs k + \ce{k/m} $. In the special case when $m=\infty$, our result gives rise to an unbounded multiset version for Hilton-Milner Theorem given by Meagher and Purdy.  Furthermore, our main theorem unites the statements of the Hilton-Milner Theorem for finite sets and unbounded multisets.
	
\vspace{2mm}
	
\noindent{\bf Key words} Hilton-Milner Theorem; multiset; non-trivial intersecting family\ \
	
\
	
\noindent{\bf MSC2010:} \   05D05, 05C35, 05A15	
\end{abstract}

\section{I{ntroduction}}\label{sec1}

This paper is devoted to the study of non-trivial intersecting families in bounded multisets. After some preliminaries we pass to the proof of our main result. We begin by discussing some elementary concepts which are basic to the theory developed below.

We shall use the following notation. Let $ \BN = \cb{0, 1, 2, \ldots} $ be the set of natural numbers. Let $ k, n \in \BN^+ $, $ m \in \BN^+ \cup \cb{\infty} $, and $ [n]_m := \cb{m \cdot 1, m \cdot 2, \ldots, m \cdot n} $, that is, $ [n]_m $ contains exactly $ m $ symbols $ i $ for each $ i = 1, 2, \ldots, n $. Then $[n]_1=[n]= \cb{1, 2, \ldots, n} $. In particular, when $ m \ne \infty $, $ [n]_m $ is said to be \emph{bounded}, and  $ [n]_\infty $ is called \emph{unbounded}. The \emph{cardinality} of
$ A = \cb{\mu_1 \cdot 1, \mu_2 \cdot 2, \ldots, \mu_n \cdot n}$ is denoted by $ \abs{A} $ and defined by $ \sum\limits_{i = 1}^{n}\mu_i $. If $A' = \cb{\mu_1' \cdot 1, \mu_2' \cdot 2, \ldots, \mu_n' \cdot n},$ then the \emph{intersection} of $ A $ and $ A' $ is denoted by $ A \cap A' $ and defined by $\cb{\min\cb{\mu_1, \mu_1'}\cdot 1, \min\cb{\mu_2, \mu_2'} \cdot 2, \ldots, \min\cb{\mu_n, \mu_n'} \cdot n}.$ The family of $ k $-\emph{uniform} subsets of $ [n]_m $ is denoted by $ \binom{[n]_m}{k} $ and defined by $\cb{A \subseteq [n]_m: \abs{A} = k}.$ Note that for any $ m \gs k $, we have $ \binom{[n]_m}{k} = \binom{[n]_\infty}{k} $. For $ \CA \subseteq [n]_m $, the \emph{total intersection} of $ \CA $ is denoted by $ \cap\CA $ and defined by $ \bigcap\limits_{A \in \CA} A $. We make the convention that $ \cap\emptyset \ne \emptyset $.

\begin{definition}\label{def}
	
	Let $ \CA \subseteq \binom{[n]_m}{k} $.
	
	\begin{enumerate}[(1)]
		\item $ \CA $ is \emph{intersecting} if for any $ A_1, A_2 \in \CA $, we have $ A_1 \cap A_2 \ne \emptyset $.
		
		\item $ \CA $ is \emph{maximal intersecting} with respect to $ \binom{[n]_m}{k} $ if $ \CA $ is not only intersecting, but also for any $ X \in \binom{[n]_m}{k} - \CA $, there exists $ A \in \CA $, such that $ A \cap X = \emptyset $.
		
		\item $ \CA $ is \emph{trivial intersecting} if $ \cap\CA \ne \emptyset $, and $ \CA $ is \emph{non-trivial intersecting} if $ \cap\CA = \emptyset $.
		
		\item A \emph{permutation} of $ [n]_1 $ is a bijective map from $ [n]_1 $ to itself. If
		$$ A = \cb{\mu_1 \cdot 1, \mu_2 \cdot 2, \ldots, \mu_n \cdot n} \subseteq [n]_m,$$
		then
		$ \sigma(A) := \cb{\mu_1 \cdot \sigma(1), \mu_2 \cdot \sigma(2), \ldots, \mu_n \cdot \sigma(n)} $
		is again a subset of $ [n]_m $ with the same cardinality as $ A $.
		
		\item Let $ \CA' \subseteq \binom{[n]_m}{k} $. $ \CA' $ is \emph{isomorphic to} $ \CA $ if there exists a permutation $ \sigma $ of $ [n]_1 $, such that $ \CA' = \cb{\sigma(A): A \in \CA} $. This relation is denoted by $ \CA' \cong \CA $.
	\end{enumerate}	
	
\end{definition}

Let $ a, b \in \BR $. The \emph{discrete interval} from $ a $ to $ b $ is denoted by $ [a, b] $ and defined by $ \cb{x \in \BZ: a \ls x \ls b} $. Denote $ H := [n - k + 1, n] $. Note that $ H $ possesses this property that $ \abs{H} = k $, $ 1 \notin H $ and $ H \subseteq [n]_1 $. The following two families are the most familiar structures in the field of we are studying.

\begin{enumerate}[(1)]
	\item The \emph{EKR-family} $ \CE_{n, k}^{m} := \cb{A \in \binom{[n]_m}{k}: 1 \in A} $, which is the candidate for the maximum intersecting family.
	
	\item The \emph{HM-family} $ \CH_{n, k}^{m} := \cb{A \in \CE_{n, k}^{m}: A \cap H \ne \emptyset} \cup \cb{H} $, which is the candidate for the non-trivial maximum intersecting family.
\end{enumerate}

\subsection{Background}

The Erd\H{o}s-Ko-Rado theorem is the fundamental result in extremal set theory that gives the size and structure of the largest intersecting family in $ \binom{[n]_1}{k} $.

\begin{theorem}[Erd\H{o}s-Ko-Rado theorem \cite{MR0140419}]\label{EKR61}
	
	Let $ n \gs 2k $ and $ \CA \subseteq \binom{[n]_1}{k} $. If $ \CA $ is intersecting,  then $ \abs{\CA} \ls \binom{n - 1}{k - 1} $. Moreover, if $ n > 2k $, then $ \abs{\CA} = \binom{n - 1}{k - 1} $ if and only if $ \CA \cong \CE_{n, k}^{1} $.
	
\end{theorem}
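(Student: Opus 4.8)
The plan is to run Katona's cyclic permutation argument, which delivers the bound on $\abs{\CA}$ almost immediately and leaves the equality case as the only substantive point. Fix a cyclic ordering $\pi$ of $[n]_1$, i.e.\ a placement of the $n$ symbols around a circle, and call $A \in \binom{[n]_1}{k}$ an \emph{arc} of $\pi$ if its $k$ elements occupy $k$ cyclically consecutive positions. The key claim is: if $\CB \subseteq \binom{[n]_1}{k}$ consists of arcs of one fixed ordering $\pi$ and is intersecting, then $\abs{\CB} \ls k$ whenever $n \gs 2k$.

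To prove the claim I would assume $\CB \ne \emptyset$, fix $A \in \CB$, and label the positions occupied by $A$ as $1, 2, \ldots, k$ around the circle. Every arc $B \ne A$ that meets $A$ either begins at one of the positions $2, \ldots, k$ or ends at one of the positions $1, \ldots, k - 1$, and these $2(k - 1)$ arcs are exactly the arcs other than $A$ meeting $A$. Pairing the arc ending at position $j$ with the arc beginning at position $j + 1$, for $j = 1, \ldots, k - 1$, the union of each pair is a block of $2k$ cyclically consecutive positions, hence (since $n \gs 2k$) carries $2k$ distinct symbols; so the two arcs in a pair are disjoint and $\CB$ contains at most one of them. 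Therefore $\abs{\CB} \ls 1 + (k - 1) = k$.

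With the claim available I would double count the pairs $(\pi, A)$ for which $\pi$ is a cyclic ordering of $[n]_1$ and $A \in \CA$ is an arc of $\pi$. There are $(n - 1)!$ cyclic orderings, each with exactly $n$ arcs, of which at most $k$ lie in $\CA$; so the number of such pairs is at most $k\,(n - 1)!$. On the other hand a fixed $A$ is an arc of exactly $k!\,(n - k)!$ cyclic orderings (arrange the block $A$ and the other $n - k$ symbols around a circle, then order $A$ internally), so the number of pairs equals $\abs{\CA}\,k!\,(n - k)!$. Comparing,
$$\abs{\CA}\,k!\,(n - k)! \ \ls\ k\,(n - 1)!,\qquad\text{that is,}\qquad \abs{\CA} \ \ls\ \binom{n - 1}{k - 1}.$$

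For the characterisation of equality when $n > 2k$: equality above forces every cyclic ordering to have exactly $k$ of its arcs in $\CA$, so in the notation of the claim $\CB$ must consist of $A$ together with exactly one arc from each of the $k - 1$ disjoint pairs. The step I expect to be the main obstacle is the local statement that, for $n > 2k$, the only such selections that keep $\CB$ pairwise intersecting are the two ``stars'': those in which all chosen arcs contain the first, respectively the last, symbol of $A$ (this is where the strict inequality enters — for $n = 2k$ the two arcs of a pair are complementary and the conclusion fails). Granting it, one knows that inside every cyclic ordering the members of $\CA$ are precisely the $k$ arcs through some symbol; a connectivity argument, using that suitably overlapping cyclic orderings force these per-ordering symbols to coincide, then produces one symbol $x \in [n]_1$ lying in every member of $\CA$. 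Hence $\CA \subseteq \cb{A \in \binom{[n]_1}{k} : x \in A}$, a family isomorphic to $\CE_{n, k}^{1}$ of size $\binom{n - 1}{k - 1}$, and equality forces $\CA \cong \CE_{n, k}^{1}$.
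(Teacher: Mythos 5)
This statement is classical background that the paper cites from the literature and does not prove, so there is no in-paper argument to compare against; I can only assess your proposal on its own terms. Your Katona-type cycle argument for the inequality $\abs{\CA} \ls \binom{n-1}{k-1}$ is correct and complete: the pairing of the arc ending at position $j$ with the arc beginning at position $j+1$ does produce $k-1$ disjoint pairs covering all $2(k-1)$ arcs other than $A$ that meet $A$ (this needs $n \gs 2k$ exactly as you use it), and the double count over cyclic orderings is standard and accurate.

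The uniqueness part, however, has a genuine gap, and one of its intermediate claims is false as stated. You assert that for $n > 2k$ the only intersecting selections consisting of $A$ together with one arc from each pair are the two stars through the first and last symbols of $A$. In fact there are $k$ such selections, one star through each symbol of $A$: with $A = \cb{1, \ldots, k}$, the star through position $i$ takes the ``beginning'' arc from the pairs $j < i$ and the ``ending'' arc from the pairs $j \gs i$. For a concrete counterexample to your claim, take $k = 3$, $n = 7$, $A = \cb{1,2,3}$: the selection $\cb{1,2,3}, \cb{2,3,4}, \cb{7,1,2}$ is pairwise intersecting (it is the star through $2$) but contains neither all arcs through $1$ nor all arcs through $3$. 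The correct local lemma --- that for $n \gs 2k+1$ every intersecting family of exactly $k$ arcs is a star through \emph{some} point --- is what your subsequent sentence actually relies on, and it is true (one checks that a selection avoiding every disjoint pair $(E_j, B_{j'})$, $j < j'$, must be of the form $B_1, \ldots, B_{i-1}, E_i, \ldots, E_{k-1}$), but you neither state it correctly nor prove it; you explicitly defer it as ``the main obstacle'' and proceed by ``granting it.'' The final gluing step --- that the per-ordering centres can be forced to coincide across all cyclic orderings, yielding a single common element of $\CA$ --- is likewise only gestured at. So the bound is proved, but the characterisation of equality is not: you would need to (i) fix and prove the local star lemma for $n \gs 2k+1$, and (ii) carry out the connectivity argument identifying the centres.
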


Since then, a lot of related works were motivated by the Erd\H{o}s-Ko-Rado theorem, many variants of Theorem \ref{EKR61} were shown up, see \cite{MR3534067, MR3497070} for more details. In \cite{MR0140419}, Erd\H{o}s, Ko and Rado also showed that any maximum $ t $-intersecting family is isomorphic to $ \cb{A \in \binom{[n]_1}{k}: [t]_1 \subseteq A} $ for $ n \gg 0 $. It is known that the threshold is $ (t + 1)(k - t + 1) $. This was proved by Frankl \cite{MR0519277} for $ t \gs 15 $ and later determined by Wilson \cite{MR0771733} for all $ t $. In \cite{MR0519277}, Frankl conjectured on the maximum size of $ t $-intersecting family for all triple of positive integers $ k, n, t $. This conjecture was partially solved by Frankl and F\"uredi in \cite{MR1092847} and completely settled by Ahlswede and Khachatrian in \cite{MR1429238}.

Ascertaining the size and structure of non-trivial maximum intersecting family in $ \binom{[n]_1}{k} $ was an open problem for a long time. The first result was the following, for $ t = 1 $.

\begin{theorem}[Hilton-Milner Theorem \cite{MR0219428}]\label{HM67}
	
	Let $ n \gs 2k \gs 6 $ and $ \CA \subseteq \binom{[n]_1}{k} $. If $ \CA $ is non-trivial intersecting, then $ \abs{\CA} \ls \binom{n - 1}{k - 1} - \binom{n - k - 1}{k - 1} + 1 $. Moreover, if $ n > 2k > 6 $, then $ \abs{\CA} = \binom{n - 1}{k - 1} - \binom{n - k - 1}{k - 1} + 1 $ if and only if $ \CA \cong \CH_{n, k}^{1} $.
	
\end{theorem}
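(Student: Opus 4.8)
Although Theorem~\ref{HM67} is classical, here is the route I would take. The organising quantity is the \emph{covering number} $\tau=\tau(\CA)$, the least size of a set of integers meeting every member of $\CA$. Since $\tau=1$ is equivalent to $\cap\CA\ne\emptyset$, a non-trivial intersecting family has $\tau\gs 2$, and I would split the argument according to $\tau=2$ versus $\tau\gs 3$, the first being the main case and the second being cheap.

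\emph{The case $\tau=2$.} Fix a transversal $\cb{a,b}$ of size $2$. Every member of $\CA$ contains $a$ or $b$, so $\CA$ is the disjoint union of $\CA_{ab}$ (members containing both), $\CA_{a}$ (members with $a$, not $b$) and $\CA_{b}$ (members with $b$, not $a$); since neither $\cb{a}$ nor $\cb{b}$ is a transversal, $\CA_{a}\ne\emptyset\ne\CA_{b}$, which is exactly where non-triviality enters. For $A\in\CA_{a}$ and $B\in\CA_{b}$ one has $A\cap B\subseteq[n]\setminus\cb{a,b}$, so $\cb{A\setminus\cb{a}:A\in\CA_{a}}$ and $\cb{B\setminus\cb{b}:B\in\CA_{b}}$ are non-empty \emph{cross-intersecting} families of $(k-1)$-subsets of the $(n-2)$-element set $[n]\setminus\cb{a,b}$. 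I would then invoke the Hilton--Milner bound for cross-intersecting uniform families — if $\CP,\CQ\subseteq\binom{[N]}{r}$ are cross-intersecting and non-empty with $N\gs 2r$, then $\abs{\CP}+\abs{\CQ}\ls\binom{N}{r}-\binom{N-r}{r}+1$ — which applies here because $n-2\gs 2(k-1)$, to get $\abs{\CA_{a}}+\abs{\CA_{b}}\ls\binom{n-2}{k-1}-\binom{n-k-1}{k-1}+1$; combined with $\abs{\CA_{ab}}\ls\binom{n-2}{k-2}$ and Pascal's identity this gives $\abs{\CA}\ls\binom{n-1}{k-1}-\binom{n-k-1}{k-1}+1$. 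For $n>2k$ (so $N=n-2>2r$, and $r=k-1\gs 3$ once $2k>6$) the extremal cross-intersecting pairs are unique up to isomorphism — one side is a single $(k-1)$-set $F$ and the other is all $(k-1)$-sets meeting $F$ — and unwinding equality identifies $\CA$ with $\CH_{n,k}^{1}$.

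\emph{The case $\tau\gs 3$.} Here $\CA$ is small: choosing $A_{1},A_{2},A_{3}\in\CA$ admitting no common $2$-element transversal and noting that each member of $\CA$ is pinned down by a bounded amount of data relative to $A_{1}\cup A_{2}\cup A_{3}$ (a standard kernel/sunflower count, as in Hilton and Milner's original paper) yields $\abs{\CA}\ls c_{k}\,n^{k-2}$ with $c_{k}$ strictly below the leading constant of $\binom{n-1}{k-1}-\binom{n-k-1}{k-1}$; a finite check then gives $\abs{\CA}<\binom{n-1}{k-1}-\binom{n-k-1}{k-1}+1$ for every $n\gs 2k$, so this case never attains the bound. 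The only slightly delicate point is the numerics for $n$ close to $2k$, which one arranges by treating $\tau=3,4,\ldots,k$ separately with explicit per-$\tau$ estimates.

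\emph{Where the work is.} The substantive ingredients are the Hilton--Milner inequality for cross-intersecting families — which I would prove by left-compression, reducing it to an explicit count, together with its equality analysis — and the $\tau\gs 3$ estimate near $n=2k$; the $\tau=2$ reduction itself is only a few lines. Two final remarks that the plan must respect: at $n=2k$ the target quantity equals the Erd\H{o}s--Ko--Rado bound $\binom{2k-1}{k-1}$ (any intersecting family on $2k$ points omits a set from each complementary pair), which is why uniqueness is asserted only for $n>2k$; and the hypothesis $2k>6$, i.e.\ $k\gs 4$, is precisely what forces $r=k-1\gs 3$ and hence a unique cross-intersecting extremum — for $k=3$ there is a second, non-isomorphic extremal family, so uniqueness genuinely fails there.
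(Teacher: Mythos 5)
First, a framing point: the paper does not prove Theorem~\ref{HM67} at all --- it is quoted as the classical Hilton--Milner theorem with a citation and then used as a black box (in Lemmas~\ref{comparison} and~\ref{structure}), so there is no in-paper argument to compare yours against. Judged on its own, your $\tau=2$ case is correct and standard: the decomposition $\CA=\CA_{ab}\uplus\CA_a\uplus\CA_b$, the reduction to a pair of non-empty cross-intersecting $(k-1)$-uniform families on $n-2\gs 2(k-1)$ points, the inequality $\abs{\CP}+\abs{\CQ}\ls\binom{N}{r}-\binom{N-r}{r}+1$, and the observation that uniqueness of the extremal cross-intersecting pair needs $r=k-1\gs 3$ (for $r=2$ the ``double star'' ties, which is exactly why $k=3$ is excluded) are all sound, and Pascal's identity does assemble the stated bound. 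The caveat is that the cross-intersecting inequality you invoke is itself a theorem of Hilton--Milner strength; ``prove it by left-compression'' is a legitimate plan, but that is where essentially all of the work of this half lives.

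The genuine gap is the $\tau\gs 3$ case. The kernel count you describe (every $A$ contains some $x_1\in A_1$, then some $x_2$ in a set avoiding $x_1$, then some $x_3$ in a set avoiding $\cb{x_1,x_2}$) gives $\abs{\CA}\ls k^3\binom{n-3}{k-3}$, which is $O(n^{k-3})$, not $c_kn^{k-2}$ --- asymptotically better than you claim, but far too weak for $n$ near $2k$: for $k=4$ it reads $64(n-3)$ against a target of $2n^2-16n+35$, and the comparison fails for every $n$ with $9\ls n\ls 36$; in general it fails for all $n$ up to order $k^2$. No ``finite check'' of this bound can succeed on that range, because the bound itself exceeds the target there; one needs a genuinely different argument (this is precisely why the standard proofs --- Hilton--Milner's original, Frankl--F\"uredi's shifting proof, Frankl's short proof via the cross-intersecting inequality applied after shifting --- do not run through the covering-number dichotomy for all $n\gs 2k$). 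Separately, your assertion that the $\tau\gs 3$ case ``never attains the bound for every $n\gs 2k$'' is false at $n=2k$: for odd $k$ the family $\cb{A\in\binom{[2k]_1}{k}:\abs{A\cap[k]_1}>k/2}$ is intersecting, has covering number at least $3$, and has size exactly $\binom{2k-1}{k-1}$, which is the Hilton--Milner value at $n=2k$. That example does not threaten the theorem (at $n=2k$ the bound is just Erd\H{o}s--Ko--Rado and no uniqueness is claimed, as your own closing remark notes), but it does show the $\tau\gs 3$ branch of your plan cannot be repaired by sharpening constants --- the strict inequality you want simply isn't true at the boundary, and just above it the estimate you propose is not strong enough to prove it.
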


In \cite{MR0480051, MR0585195}, Frankl dealt with the case $ t \gs 2 $ for $ n \gg 0 $. In \cite{MR0826944}, Frankl and F\"uredi used the shifting technique to give an elegant proof of Theorem \ref{HM67}. Again, Ahlswede and Khachatrian completely solved this problem in \cite{MR1405994}. Recently, other non-trivial maximal $ t $-intersecting families with large size had been studied, see \cite{MR4275621, MR3565361, MR3626491}.

In this paper, we generalize Theorem \ref{HM67} to bounded multisets. Meagher and Purdy were the first to study intersecting families of $ \binom{[n]_\infty}{k} $. They obtained the Erd\H{o}s-Ko-Rado theorem for $ \binom{[n]_\infty}{k} $ in \cite{MR2861399}. In \cite{MR4717700}, the Erd\H{o}s-Ko-Rado theorem for $ \binom{[n]_m}{k} $ was obtained. Later, F\"uredi, Gerbner and Vizer obtained the $ t $-intersecting version of the Erd\H{o}s-Ko-Rado theorem for $ \binom{[n]_\infty}{k} $ in \cite{MR3339026}. Shortly after that, Meagher and Purdy obtained the $ t $-intersecting version of the Hilton-Milner theorem, but we only state the case $ t = 1 $ here.

\begin{theorem}[\cite{MR3425970}]\label{MP16}
	
	Let $ n \gs k + 1 \gs 4 $ and $ \CA \subseteq \binom{[n]_\infty}{k} $. If $ \CA $ is non-trivial intersecting, then $ \abs{\CA} \ls \binom{n + k - 2}{k - 1} - \binom{n - 2}{k - 1} + 1 $. Moreover, if $ n > k + 1 > 4 $, then $ \abs{\CA} = \binom{n + k - 2}{k - 1} - \binom{n - 2}{k - 1} + 1 $ if and only if $ \CA \cong \CH_{n, k}^{\infty} $.
	
\end{theorem}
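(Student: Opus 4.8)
\emph{Proof proposal.} The crucial structural observation is that, directly from the definition of intersection for multisets, two $k$-multisets $A,B$ satisfy $A\cap B\ne\emptyset$ if and only if their \emph{supports} $\mathrm{supp}(A):=\cb{i\in[n]:\mu_i(A)\gs1}$ and $\mathrm{supp}(B)$ meet as ordinary sets. Hence, if $\CA\subseteq\binom{[n]_\infty}{k}$ is non-trivial intersecting and $\CF:=\cb{\mathrm{supp}(A):A\in\CA}$, then $\CF$ is an intersecting family of subsets of $[n]$ of sizes in $[1,k]$ with $\bigcap_{S\in\CF}S=\emptyset$, and $\abs{\CA}\ls w(\CF):=\sum_{S\in\CF}\binom{k-1}{\abs{S}-1}$, because exactly $\binom{k-1}{s-1}$ of the $k$-multisets in $\binom{[n]_\infty}{k}$ have a given support of size $s$. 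Enlarging $\CF$ to a maximal intersecting family within $\binom{[n]}{\ls k}$ keeps it non-trivial (adding sets only shrinks the total intersection) and can only increase $w$; moreover a maximal intersecting family in $\binom{[n]}{\ls k}$ is an up-set there. Thus the theorem reduces to the set-level statement: \emph{if $\CF$ is a maximal intersecting up-set in $\binom{[n]}{\ls k}$ with $\bigcap\CF=\emptyset$, then $w(\CF)\ls\binom{n+k-2}{k-1}-\binom{n-2}{k-1}+1$, with equality only when $\CF$ is the support family of an isomorphic copy of $\CH_{n,k}^\infty$.} A short Vandermonde computation shows this support family does attain the bound, so the numerics match, and following equality through the reduction recovers $\CA\cong\CH_{n,k}^\infty$.

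To prove the set-level statement I would run a Hilton--Milner-type argument adapted to the non-uniform, weighted setting. Since $\bigcap\CF=\emptyset$ while $\CF$ is intersecting, $\CF$ contains no singleton, so every inclusion-minimal $T_0\in\CF$ has $t:=\abs{T_0}\in[2,k]$. By maximality $T_0\setminus\cb{x}\notin\CF$ for each $x\in T_0$, whence there is $G_x\in\CF$ with $G_x\cap T_0=\cb{x}$. Split $\CF=\CF^{\supseteq}\cup\CF^{\not\supseteq}$ according to whether $T_0\subseteq S$. Since $\CF$ is an up-set, $\CF^{\supseteq}$ is \emph{all} of $\cb{S\in\binom{[n]}{\ls k}:T_0\subseteq S}$, so $w(\CF^{\supseteq})=\binom{n+k-t-1}{k-t}$ exactly. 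For $S\in\CF^{\not\supseteq}$, picking $x\in T_0\setminus S$ forces $S$ to meet $G_x\setminus\cb{x}$, so every such $S$ meets both $T_0$ and $W:=\bigcup_{x\in T_0}(G_x\setminus\cb{x})$; a two-point double count gives $w(\CF^{\not\supseteq})=O_k(n^{k-2})$, already of the right order since $\binom{n+k-2}{k-1}-\binom{n-2}{k-1}+1=\Theta_k(n^{k-2})$. (An alternative is to apply the compression operators $S_{ij}$ to $\CF$: they preserve the intersecting property and the multiset of set sizes, hence $w$, and reduce to a shifted $\CF$; here the compression of the support family of $\CH_{n,k}^\infty$ is the extremal shifted configuration.)

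The main obstacle is pinning down the exact constant and the extremal families, not the order of magnitude. One must show that $w(\CF^{\supseteq})+w(\CF^{\not\supseteq})$ is maximised only when $t=2$ and, after relabelling, $T_0=\cb{1,n-k+1}$, $\CF^{\supseteq}$ is the full up-set of supersets of $T_0$, and $\CF^{\not\supseteq}$ is forced to be exactly the remaining sets of the $\CH_{n,k}^\infty$-support family; beyond the crude count this uses that $\CF^{\not\supseteq}$ is itself intersecting and cross-intersects $\CF^{\supseteq}$, with the binomial weights $\binom{k-1}{s-1}$ carefully bookkept across all admissible support sizes $s$. In the compression route the same difficulty appears in its classical Frankl--F\"uredi guise: a compression can turn a non-trivial family into a trivial one, and the family immediately before such a step --- in which every member avoiding the relevant coordinate shares a second coordinate --- must be bounded by hand, again with the weights. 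Finally, the boundary case $n=k+1$ has to be treated separately, since there the relevant strict inequalities fail and uniqueness of the extremal family genuinely breaks down, consistent with the hypothesis $n>k+1>4$ in the ``moreover'' part.
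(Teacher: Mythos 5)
Your opening reduction is sound and is in fact the same first move both known proofs make: for multisets, $A\cap B\ne\emptyset$ depends only on the supports, a support of size $s$ carries exactly $\binom{k-1}{s-1}$ members of $\binom{[n]_\infty}{k}$, and the total intersection of $\CA$ is empty iff the supports have empty intersection, so the theorem becomes a \emph{weighted, non-uniform} Hilton--Milner problem for an intersecting up-set $\CF$ in $\cb{S\subseteq [n]: 1\ls \abs{S}\ls k}$. The gap is that this set-level statement is the entire content of the theorem, and your proposal does not prove it. Your estimate $w(\CF^{\not\supseteq})=O_k(n^{k-2})$ has the same order as the target $\binom{n+k-2}{k-1}-\binom{n-2}{k-1}+1$ and as $w(\CF^{\supseteq})=\binom{n+k-t-1}{k-t}$ itself, so no inequality (let alone the equality characterization) can be assembled from it; you then explicitly defer the determination of the exact constant, the forcing of $t=2$ and of the extremal configuration, and the delicate step in the compression route, to future work. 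What remains unproved is precisely the theorem.

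The missing ingredient, in both proofs that exist, is a device converting the weighted non-uniform problem into instances of known exact results. Meagher and Purdy (the source cited for Theorem \ref{MP16}) exploit $\abs{\binom{[n]_\infty}{k}}=\binom{n+k-1}{k}$ to build a bijective homomorphism from the Kneser graph $K(n+k-1,k)$ onto the multiset disjointness graph $M(n,k)$, so the classical Theorem \ref{HM67} on $\binom{[n+k-1]_1}{k}$ transfers directly, exact constant and all. The present paper, which must also handle bounded multiplicities where no such binomial identity is available, instead completes the support family to a maximal intersecting family $\CB$ in $\CP$ (all nonempty proper subsets of $[n]_1$, not only those of size at most $k$); such a $\CB$ contains exactly one of each pair $\cb{B,B^c}$ and is a $\CP$-up-set, which pairs the layer $\ell$ against the layer $n-\ell$ so that only layers $\ell\ls\fl{n/2}$ carry a nonnegative weight $C_{k,\ell}-C_{k,n-\ell}$, and then the uniform Theorem \ref{HM67} is applied to each layer $\CB(\ell)$ separately (Lemmas \ref{computation}--\ref{structure} and \ref{formula}). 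Either device would close your gap; without one, the exact bound and the uniqueness statement are asserted rather than proved.
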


We describe their method briefly. A Kneser graph $ K(n, k) $ is a graph with vertex set $ \binom{[n]_1}{k} $, and two vertices are adjacent if and only if the corresponding $ k $-sets are disjoint. Thus an independent set of $ K(n, k) $ is equivalent to an intersecting family of $ \binom{[n]_1}{k} $. Meagher and Purdy defined the graph $ M(n, k) $ with vertex set $ \binom{[n]_\infty}{k} $, and two vertices are adjacent if and only if the corresponding $ k $-multisets are disjoint. They constructed a bijective homomorphism $ f: V(K(n + k - 1, k)) \rightarrow V(M(n, k)) $, which induces the following inequality$$\alpha(M(n, k)) \ls \alpha(K(n + k - 1, k)).$$ Such a homomorphism exists, due to the fact that $ \abs{\binom{[n]_\infty}{k}} = \binom{n + k - 1}{k} $ is also a binomial coefficient. Unfortunately, $ \binom{[n]_m}{k} $ is not a binomial coefficient in general. But the principle is the same, that is, reduce to the case $ m = 1 $.

\subsection{Main result}

In \cite{MR4717700}, we proved the Erd\H{o}s-Ko-Rado theorem for $ \binom{[n]_m}{k} $. In this paper, we concern the Hilton-Milner theorem for $ \binom{[n]_m}{k} $. Throughout this paper we fix $ q := \ce{k/m} $ ($ \ce{k/\infty} := 1 $).

\begin{theorem}\label{sec1_main_thm}
	
	Let $ k \gs 4 $, $ 2 \ls m \ls \infty $, $ n \gs k + q $ and $ \CA \subseteq \binom{[n]_m}{k} $. If $ \CA $ is non-trivial intersecting, then $ \abs{\CA} \ls \abs{\CH_{n, k}^{m}} $. Moreover, if one of the following two conditions hold
	
	\begin{enumerate}[(A)]
		\item $ n > k + q $,
		
		\item $ n = k + q $ and $ \min\cb{k, m} \nmid k $,
	\end{enumerate}
	
	\noindent then $ \abs{\CA} = \abs{\CH_{n, k}^{m}} $ if and only if $ \CA \cong \CH_{n, k}^{m} $.
	
\end{theorem}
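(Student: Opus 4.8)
The plan is to recast the question as a weighted, non-uniform Hilton--Milner problem for ordinary set systems, and then to attack that by compression.

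\emph{Reduction to supports.} For $A=\cb{\mu_1\cdot 1,\dots,\mu_n\cdot n}\in\binom{[n]_m}{k}$ write $\mathrm{supp}(A)=\cb{i:\mu_i>0}\subseteq[n]_1$, so $q\ls\abs{\mathrm{supp}(A)}\ls k$. Since $A_1\cap A_2\ne\emptyset$ iff $\mathrm{supp}(A_1)\cap\mathrm{supp}(A_2)\ne\emptyset$, and $\cap\CA=\emptyset$ iff $\bigcap_{A\in\CA}\mathrm{supp}(A)=\emptyset$, both hypotheses on $\CA$ depend only on the family of supports. A non-trivial intersecting family is contained in a \emph{maximal} intersecting family, which is automatically non-trivial (no star contains a non-trivial family); so we may assume $\CA$ maximal, in which case $\CA$ contains every $k$-multiset whose support occurs in $\CS:=\cb{\mathrm{supp}(A):A\in\CA}$, and therefore $\abs{\CA}=\sum_{S\in\CS}w(\abs S)$. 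Here $w(s)$ is the number of $k$-multisets with a prescribed support of size $s$, i.e. the number of integer solutions of $x_1+\dots+x_s=k$ with $1\ls x_i\ls m$; it depends only on $k,m,s$, vanishes unless $q\ls s\ls k$, satisfies $w(k)=1$, and equals $\binom{k-1}{s-1}$ when $m=\infty$. Thus it suffices to prove: \emph{if $\CS$ is an intersecting family of subsets of $[n]_1$ of size at most $k$ with $\cap\CS=\emptyset$, then $\sum_{S\in\CS}w(\abs S)\ls\abs{\CH_{n,k}^{m}}$, with equality (under (A) or (B)) only when $\CS$ is isomorphic to the support family $\cb{S:1\in S,\ S\cap H\ne\emptyset}\cup\cb H$ of $\CH_{n,k}^{m}$.} In particular $\abs{\CH_{n,k}^{m}}=\sum_{S:1\in S}w(\abs S)-\sum_{S:1\in S,\ S\subseteq[1,n-k]}w(\abs S)+1$, all three terms being elementary multiset counts.

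\emph{Compression and the shifted case.} Apply the shifts $S_{ij}$, $1\ls i<j\ls n$, to $\CS$. Each such shift keeps every member the same size, hence preserves $\sum_{S\in\CS}w(\abs S)$, and preserves being intersecting. The only delicate point is that a shift may destroy $\cap\CS=\emptyset$; this is dealt with exactly as in Frankl and F\"uredi's proof of Theorem~\ref{HM67}, by selecting a maximum-weight non-trivial solution that resists further shifting and handling the final trivializing shift by a direct comparison. We may thus assume $\CS$ shifted. Non-triviality gives a member $B$ with $1\notin B$; a short direct estimate shows a ``small'' such $B$ cannot meet the target bound, so we may take $\abs B=k$, and then shiftedness forces $B=[2,k+1]$, where $n\gs k+q$ is exactly what guarantees the existence alongside $B$ of a set through $1$ disjoint from $B$. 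A further short argument now confines every member of $\CS$ to $\cb{S:1\in S,\ S\cap[2,k+1]\ne\emptyset}\cup\cb{[2,k+1]}$, a shifted copy of the support family of $\CH_{n,k}^{m}$; summing $w$ over this family and undoing the compression yields $\abs\CA\ls\abs{\CH_{n,k}^{m}}$.

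\emph{Equality and the threshold $n=k+q$.} If $n>k+q$, the containment just obtained is strict unless $\CS$ is, up to a permutation, the support family of $\CH_{n,k}^{m}$; this gives uniqueness in case (A). The case $n=k+q$ is genuinely different: a minimal $k$-multiset then uses exactly $q$ symbols, and when $\min\cb{k,m}\mid k$ all its nonzero multiplicities are equal, which makes room for a second, non-isomorphic extremal configuration of Ahlswede--Khachatrian type (supports meeting a fixed $3$-set in at least two symbols, weighted by $w$). Condition (B) excludes precisely this coincidence, so uniqueness persists. Specializing the reduced inequality: $m=1$ gives $q=k$ and $\min\cb{k,m}=1\mid k$ always (so (B) is vacuous), recovering Theorem~\ref{HM67}; $m\gs k$ gives $q=1$ and recovers Theorem~\ref{MP16}.

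\emph{Where the difficulty concentrates.} The hard work is in making the last two steps function at the sharp threshold $n=k+q$. Because that bound is best possible there is no room for lossy estimates---a covering-number argument only delivers something like $\abs\CA\ll k^{2}\binom{n+k}{k-2}$, which exceeds $\abs{\CH_{n,k}^{m}}$ badly when $n$ is close to $k$---so one is forced to extract the exact extremal structure. Three features make this technical: controlling non-triviality through the compression; handling all support sizes simultaneously, since $s\mapsto w(s)$ is not monotone (it grows like $\binom{k-1}{s-1}$) and so the usual ``heavier layers dominate'' simplification is unavailable; and classifying the boundary extremal families finely enough to isolate the role of condition (B).
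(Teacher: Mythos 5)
Your reduction to the weighted support problem is correct and coincides with the paper's first step: your $w(s)$ is the paper's $C_{k,s}$, and the identity $\abs{\CA}=\sum_{S\in\CS}w(\abs S)$ for maximal $\CA$ is Lemma~\ref{relate}. From there, however, you propose compression, whereas the paper never shifts. It extends the support family to a maximal intersecting family $\CB\subseteq\CP$, uses the fact that such a $\CB$ contains exactly one set from each complementary pair (Lemma~\ref{MR4717700}) to write $\abs{\CH_{n,k}^{m}}-\abs{\CA}=\sum_{\ell}\pt{C_{k,\ell}-C_{k,n-\ell}}\pt{\abs{\CV(\ell)}-\abs{\CB(\ell)}}$, proves $C_{k,\ell}\gs C_{k,n-\ell}$ for $\ell\ls n/2$, and then bounds each layer $\abs{\CB(\ell)}$ by applying the classical Hilton--Milner theorem (Theorem~\ref{HM67}) to the $\ell$-uniform family $\CB(\ell)$: a too-large layer would be a star, contradicting the up-set/down-set structure of maximal families. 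This sidesteps every difficulty you flag in your closing paragraph: the non-monotonicity of $w$ is absorbed into the signed coefficients $C_{k,\ell}-C_{k,n-\ell}$, and non-triviality is only ever invoked through the classical HM bound on a single uniform layer.

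As written, your compression route has genuine gaps at precisely the points you yourself identify as hard. First, preserving non-triviality under shifting is not ``exactly as in Frankl--F\"uredi'': their trivializing-shift comparison is a count within one uniform level, while here the family has members of all sizes in $[q,k]$ weighted by a non-monotone $w$, so a new comparison is required and none is supplied. Second, the claim that the set $B$ with $1\notin B$ may be taken of size $k$ rests on an unstated ``short direct estimate''; this is the heart of the matter, since for $\abs B=\ell<k$ one must compare $\sum_{S\ni 1,\,S\cap B\ne\emptyset}w(\abs S)$ (plus corrections) against $\abs{\CH_{n,k}^{m}}$ uniformly in $\ell$, exactly where the non-monotonicity of $w$ bites. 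Third, the uniqueness discussion is asserted rather than proved, and the specific claim that condition (B) excludes an Ahlswede--Khachatrian-type family is, I believe, not the right mechanism: the degeneracy at $n=k+q$ with $\min\cb{k,m}\mid k$ is that $C_{k,q}=C_{k,k}=1$, so the level-$q$ layer enters with coefficient $C_{k,q}-C_{k,n-q}=0$ and complementary pairs $\cb{B,B^c}$ with $\abs B=q$, $\abs{B^c}=k$ can be exchanged without changing the total weight, which is what produces non-isomorphic extremal families. The skeleton of your argument is plausible, but the proof is not there.
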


In the special case when $m=\infty$, our result gives rise to an unbounded multiset version for Hilton-Milner Theorem given by Meagher and Purdy (Theorem~\ref{MP16}). Furthermore, our main theorem unites the statements of the Hilton-Milner Theorem for finite sets and unbounded multisets.

The family of non-empty proper subsets of $ [n]_1 $ is denoted by $ \CP $ and defined by $ \cb{B: \emptyset \subsetneqq B \subsetneqq [n]_1} $.

\section{Intersecting families in $ \CP $}\label{sec2}

In this section, we give some more notation and lemmas which will be used in our proof of the main result. Throughout this paper we fix $ H := [n - k + 1, n] $.

\begin{notation}
	
	Let $ k, n \in \BN^+ $, $ m \in \BN^+ \cup \cb{\infty} $ and $ \CB \subseteq \CP $.
	
	\begin{enumerate}[(1)]
		
		\item For $ B \in \CP $, the \emph{complement} of $ B $ is denoted by $ B^c $ and defined by $ [n]_1 - B $.
		
		\item The \emph{dual} of $ \CB $ is denoted by $ \CB^c $ and defined by $ \cb{B^c: B \in \CB} $.
		
		\item The $ i $-\emph{uniform part} of $ \CB $ is denoted by $ \CB(i) $ and defined by $ \cb{B \in \CB: \abs{B} = i} $.
		
		\item The \emph{valuable part} of $ \CB $ is denoted by $ \CB^\star $ and defined by $ \bigcup\limits_{i = q}^{k} \CB(i) $.
		
		\item The family corresponding to $ \CE_{n, k}^{m} $ is denoted by $ \CU $ and defined by $ \cb{B \in \CP: 1 \in B} $.
		
		\item The \emph{removed part} of $ \CU $ is denoted by $ \CR $ and defined by $ \cb{B \in \CU: B \subseteq [n - k]_1} $. Note that $ H^c = [n - k]_1 $.
		
		\item The family corresponding to $ \CH_{n, k}^{m} $ is denoted by $ \CV $ and defined by $ (\CU - \CR) \cup \CR^c $. Note that $ \CV^\star = \cb{B \in \bigcup\limits_{i = q}^{k} \CU(i): B \cap H \ne \emptyset} \cup \cb{H} $.
	\end{enumerate}
	
\end{notation}

The first step is to evaluate the size of the $ \ell $-uniform part of $ \CV $.

\begin{lemma}\label{computation}
	
	Let $ k \gs 4 $, $ 2 \ls m \ls \infty $ and $ n \gs k + q $. If $ 2 \ls \ell \ls w := \min\cb{k, \fl{n/2}} $, then $ \abs{\CV(\ell)} \gs \binom{n - 1}{\ell - 1} - \binom{n - \ell - 1}{\ell - 1} + 1 $. Moreover, if $ \ell \ne w $, then $ \abs{\CV(\ell)} > \binom{n - 1}{\ell - 1} - \binom{n - \ell - 1}{\ell - 1} + 1 $.
	
\end{lemma}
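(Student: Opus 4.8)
The plan is to compute $\abs{\CV(\ell)}$ exactly and then compare it with the Hilton--Milner quantity $\binom{n-1}{\ell-1}-\binom{n-\ell-1}{\ell-1}+1$ by means of a single Pascal-type telescoping identity. Recall $\CV=(\CU-\CR)\cup\CR^c$; every member of $\CU-\CR$ contains $1$ and no member of $\CR^c$ does, so this union is disjoint on each uniform level. Counting the $\ell$-subsets of $[n]_1$ containing $1$, then those lying in addition inside $H^c=[n-k]_1$, and finally the complements of the $(n-\ell)$-sets of $\CR$, one obtains
$$\abs{\CV(\ell)}=\binom{n-1}{\ell-1}-\binom{n-k-1}{\ell-1}+\binom{n-k-1}{n-\ell-1}.$$
Since $2\ls\ell\ls w\ls k$, the last summand is $0$ when $\ell<k$ and equals $1$ when $\ell=k$; in the latter case $\ell=k$ forces $k\ls\fl{n/2}$, hence $\ell=w$, and $\CV(k)$ is precisely the classical Hilton--Milner family $\CH_{n,k}^{1}$ with kernel $H$, so $\abs{\CV(k)}=\binom{n-1}{k-1}-\binom{n-k-1}{k-1}+1$ and the asserted inequality holds there as an equality. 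It therefore remains to handle $2\ls\ell<k$, where the formula above reduces the claim to showing $S(\ell):=\binom{n-\ell-1}{\ell-1}-\binom{n-k-1}{\ell-1}\gs 1$, and $S(\ell)\gs 2$ when moreover $\ell\ne w$ (equivalently $\ell<w$, since $\ell\ls w$).

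For the binomial bound, repeated application of Pascal's rule gives the identity
$$S(\ell)=\sum_{i=\ell}^{k-1}\binom{n-i-2}{\ell-2},$$
a sum of $k-\ell\gs 1$ non-negative terms whose leading summand is $\binom{n-\ell-2}{\ell-2}$. Because $\ell\ls w\ls\fl{n/2}$ we have $n-\ell-2\gs\ell-2\gs 0$, so $\binom{n-\ell-2}{\ell-2}\gs 1$ and hence $S(\ell)\gs 1$; this already yields $\abs{\CV(\ell)}\gs\binom{n-1}{\ell-1}-\binom{n-\ell-1}{\ell-1}+1$ for every $\ell$ in range.

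For strictness, assume $2\ls\ell<w$, so $\ell<k$ and $\ell\ls\fl{n/2}-1$, i.e.\ $n\gs 2\ell+2$. If $\ell\ls k-2$ the sum has at least two terms, so $S(\ell)\gs\binom{n-\ell-2}{\ell-2}+\binom{n-\ell-3}{\ell-2}\gs\binom{\ell}{2}+(\ell-1)\gs 2$. If $\ell=k-1$, then $\ell<w$ is incompatible with $w=\fl{n/2}<k$, so $w=k$ and $n\gs 2k$, whence $S(\ell)=\binom{n-k-1}{k-3}\gs\binom{k-1}{k-3}=\binom{k-1}{2}\gs 3$ using $k\gs 4$. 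In either case $S(\ell)\gs 2$, so $\abs{\CV(\ell)}>\binom{n-1}{\ell-1}-\binom{n-\ell-1}{\ell-1}+1$, which completes the proof.

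The only genuinely delicate point is the strictness in the last step, and within it the borderline case $\ell=k-1$: there the telescoping sum collapses to a single binomial coefficient, so one must first invoke $\ell<w$ to rule out $w=\fl{n/2}<k$ and thereby deduce $n\gs 2k$ before the estimate closes, and it is exactly here that the hypothesis $k\gs 4$ is used.
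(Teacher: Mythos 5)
Your proof is correct and follows essentially the same route as the paper: compute $\abs{\CV(\ell)}$ exactly as $\binom{n-1}{\ell-1}-\binom{n-k-1}{\ell-1}+\binom{n-k-1}{n-\ell-1}$ and reduce the claim to the binomial inequality $\binom{n-\ell-1}{\ell-1}-\binom{n-k-1}{\ell-1}\gs 1$ (resp.\ $\gs 2$), which both arguments settle by elementary estimates using $\ell\ls\fl{n/2}$ and $k\gs 4$. The only differences are organizational: you avoid the paper's split into the cases $n\gs 2k$ and $n<2k$ by treating the term $\binom{n-k-1}{n-\ell-1}$ uniformly, and you prove strictness directly via the telescoped sum $\sum_{i=\ell}^{k-1}\binom{n-i-2}{\ell-2}$ rather than by the paper's contradiction argument.
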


\begin{proof}
	
	Recall the definition of $ \CV $, which is $ (\CU - \CR) \cup \CR^c $, where $ \CR = \cb{B \in \CU: B \subseteq [n - k]_1} $. Note that
	\begin{align*}
		\abs{\CV(\ell)}
		&= \abs{\CU(\ell)} - \abs{\CR(\ell)} + \abs{\CR^c(\ell)} \\
		&= \abs{\CU(\ell)} - \abs{\CR(\ell)} + \abs{\CR(n - \ell)^c} \\
		&= \abs{\CU(\ell)} - \abs{\CR(\ell)} + \abs{\CR(n - \ell)}.
	\end{align*}
	We discuss in two cases.
	
	\begin{enumerate}[{Case} A:]
		\item $ n \gs 2k $.
		
		In this case, we have $ w = k $ and so $ 2 \ls \ell \ls k $. Consider the non-zero interval of $ \abs{\CR(\ell)} $ and $ \abs{\CR(n - \ell)} $ respectively.
		
		\begin{enumerate}[(a)]
			\item $ \abs{\CR(\ell)} \ne 0 \Leftrightarrow \ell \in [1, n - k] \cap [2, k] = [2, k] $.
			
			\item $ \abs{\CR(n - \ell)} \ne 0 \Leftrightarrow n - \ell \in [1, n - k] \cap [n - k, n - 2] = \cb{n - k} $, i.e., $ \ell = k $.
		\end{enumerate} Then\begin{equation}\label{eqn1}
			\abs{\CV(\ell)} = \left\{\begin{aligned}
				&\binom{n - 1}{\ell - 1} - \binom{n - k - 1}{\ell - 1},&\qquad&\text{if } 2 \ls \ell < k,\\
				&\binom{n - 1}{k - 1} - \binom{n - k - 1}{k - 1} + 1,&\qquad&\text{if } \ell = k.
			\end{aligned}\right.
		\end{equation}
		
		\noindent Hence $ \abs{\CV(\ell)} \gs \binom{n - 1}{\ell - 1} - \binom{n - \ell - 1}{\ell - 1} + 1 $ when $ n \gs 2k $.
		
		\item $ n < 2k $.
		
		In this case, we have $ w = \fl{n/2} $ and so $ 2 \ls \ell \ls \fl{n/2} $. Consider the non-zero interval of $ \abs{\CR(\ell)} $ and $ \abs{\CR(n - \ell)} $ respectively.
		
		\begin{enumerate}[(a)]
			\item $ \abs{\CR(\ell)} \ne 0 \Leftrightarrow \ell \in [1, n - k] \cap [2, \fl{n/2}] = [2, n - k] $.
			
			\item $ \abs{\CR(n - \ell)} \ne 0 \Leftrightarrow n - \ell \in [1, n - k] \cap [\ce{n/2}, \fl{n/2}] = \emptyset $, i.e., $ \abs{\CR(n - \ell)} \equiv 0 $.
		\end{enumerate} Then\begin{equation}\label{eqn2}
			\abs{\CV(\ell)} = \left\{\begin{aligned}
				&\binom{n - 1}{\ell - 1} - \binom{n - k - 1}{\ell - 1},&\qquad&\text{if } 2 \ls \ell < k,\\
				&\binom{n - 1}{\ell - 1},&\qquad&\text{if } \ell = k.
			\end{aligned}\right.
		\end{equation}
		
		\noindent Hence $ \abs{\CV(\ell)} \gs \binom{n - 1}{\ell - 1} - \binom{n - \ell - 1}{\ell - 1} + 1 $ when $ n < 2k $.
		
		Moreover, assume $ \ell \ne w $, we need to show that $ \abs{\CV(\ell)} > \binom{n - 1}{\ell - 1} - \binom{n - \ell - 1}{\ell - 1} + 1 $. Otherwise, suppose $ \abs{\CV(\ell)} = \binom{n - 1}{\ell - 1} - \binom{n - \ell - 1}{\ell - 1} + 1 $. Since $ \ell \ne w $, the only possibility for $ \abs{\CV(\ell)} = \binom{n - 1}{\ell - 1} - \binom{n - \ell - 1}{\ell - 1} + 1 $ is $ \abs{\CV(\ell)} = \binom{n - 1}{\ell - 1} - \binom{n - k - 1}{\ell - 1} $. Thus
		\begin{align*}
			1
			&= \binom{n - \ell - 1}{\ell - 1} - \binom{n - k - 1}{\ell - 1} \\
			&\gs \binom{n - \ell - 1}{\ell - 1} - \binom{n - \ell - 2}{\ell - 1} \\
			&= \binom{n - \ell - 2}{\ell - 2} \\
			&\gs \binom{\ell - 1}{\ell - 2} \\
			&= \ell - 1.
		\end{align*}
		Since $ \ell \gs 2 $, we have $ \ell = 2 $ and then $ k = \ell + 1 = 3 $, contradict to $ k \gs 4 $. \qedhere
		
	\end{enumerate}
	
\end{proof}

From now on, we study the intersecting families in $ \CP $. The following definitions are analogs of Definition \ref{def}.

\begin{definition}
	
	Let $ \CB \subseteq \CP $.
	
	\begin{enumerate}[(1)]
		\item $ \CB $ is \emph{intersecting} if for any $ B_1, B_2 \in \CB $, we have $ B_1 \cap B_2 \ne \emptyset $.
		
		\item $ \CB $ is \emph{maximal intersecting} with respect to $ \CP $ if $ \CB $ is not only intersecting, but also for any $ Y \in \CP - \CB $, there exists $ B \in \CB $, such that $ B \cap Y = \emptyset $.
		
		\item Let $ \CB' \subseteq \CP $. $ \CB' $ is \emph{isomorphic to} $ \CB $ if there exists a permutation $ \sigma $ of $ [n]_1 $, such that $ \CB' = \cb{\sigma(B): B \in \CB} $. This relation is denoted by $ \CB' \cong \CB $.
	\end{enumerate}		
	
\end{definition}

The following Lemma tells us that any maximal intersecting family in $ \CP $ has cardinality $ 2^{n - 1} - 1 $.

\begin{lemma}[\cite{MR4717700}, Lemma 3.1 and Corollary 3.2]\label{MR4717700}
	
	Let $ \CB \subseteq \CP $ and $ \CB $ is maximal intersecting with respect to $ \CP $. Then
	
	\begin{enumerate}[(i)]
		\item for any $ B \in \CP $, we have $ \abs{\cb{B, B^c} \cap \CB} = 1 $.
		
		\item $ \CB $ is a $ \CP $-up-set (i.e., if $ B \in \CB $ and $ B \subseteq B' \in \CP $, then $ B' \in \CB $).
		
	\end{enumerate}	
	
\end{lemma}

The following two Lemmas is used in the proof of Lemma \ref{comparison}.

\begin{lemma}\label{down_set}
	
	Let $ \CB \subseteq \CP $ and $ \CB $ is maximal intersecting with respect to $ \CP $. Let $ \CD \subseteq \CU $. Then the following two statements are equivalent:
	
	\begin{enumerate}[(i)]
		\item $ (\CU - \CD) \cup \CD^c $ is maximal  intersecting with respect to $ \CP $.
		
		\item \begin{enumerate}[(a)]
			\item $ \CD $ is a $ \CU $-down-set (i.e., if $ D \in \CD $ and $ D \supseteq D' \in \CU $, then $ D' \in \CD $).
			
			\item If $ D_1, D_2 \in \CD $, then $ D_1 \cup D_2 \ne [n]_1 $.
		\end{enumerate}
	\end{enumerate}	
	
\end{lemma}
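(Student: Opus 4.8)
Writing $\CW := (\CU - \CD) \cup \CD^c$ for the family appearing in statement~(i), the plan is to reduce both directions to one structural observation about $\CW$. Since $\CD \subseteq \CU$, every member of $\CU - \CD$ contains the element $1$ while every member of $\CD^c$ omits it, so these two parts of $\CW$ are disjoint, and passing from $\CU$ to $\CW$ simply replaces each $D \in \CD$ by its complement $D^c$; in particular, for every $B \in \CP$ exactly one of $B, B^c$ still lies in $\CW$. Note also $\CW \subseteq \CP$, and every complement occurring below ($D^c$ for $D \in \CD$, or $Y^c$ for $Y \in \CP$) is again a non-empty proper subset of $[n]_1$, hence lies in $\CP$; I will use this without comment.

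For the implication (ii) $\Rightarrow$ (i) I would verify directly that $\CW$ is intersecting and maximal. For intersecting, take $B_1, B_2 \in \CW$: if both lie in $\CU - \CD$ they share $1$; if $B_1 \in \CU - \CD$ and $B_2 = D_2^c$ with $D_2 \in \CD$, then $B_1 \cap B_2 = \emptyset$ would give $B_1 \subseteq D_2$ with $B_1 \in \CU$, so the $\CU$-down-set property~(ii)(a) would force $B_1 \in \CD$, a contradiction; and if $B_1 = D_1^c$, $B_2 = D_2^c$ with $D_1, D_2 \in \CD$, then $B_1 \cap B_2 = (D_1 \cup D_2)^c$, which is non-empty exactly by~(ii)(b). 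For maximal, given $Y \in \CP - \CW$ I claim $Y^c$ does the job (it is disjoint from $Y$ and lies in $\CW$): if $1 \in Y$ then $Y$ is in neither $\CU - \CD$ nor $\CD^c$, so $Y \in \CD$ and hence $Y^c \in \CD^c \subseteq \CW$; if $1 \notin Y$ then $Y \notin \CD^c$, so $Y^c \notin \CD$ and hence $Y^c \in \CU - \CD \subseteq \CW$.

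For the implication (i) $\Rightarrow$ (ii) I would invoke Lemma~\ref{MR4717700} applied to $\CW$: being maximal intersecting, $\CW$ is a $\CP$-up-set and satisfies $\abs{\cb{B, B^c} \cap \CW} = 1$ for all $B \in \CP$. Every $D \in \CD$ contains $1$, hence lies in neither $\CU - \CD$ nor $\CD^c$, so $D \notin \CW$ and therefore $D^c \in \CW$. Now for~(ii)(a): if $D \in \CD$, $D' \in \CU$, $D' \subseteq D$ but $D' \notin \CD$, then $D' \in \CU - \CD \subseteq \CW$, so the up-set property forces $D \in \CW$ --- contradiction; thus $D' \in \CD$. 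For~(ii)(b): if $D_1, D_2 \in \CD$ with $D_1 \cup D_2 = [n]_1$, then $D_1^c \subseteq D_2$ while $D_1^c \in \CW$, so the up-set property forces $D_2 \in \CW$ --- again contradicting $D_2 \in \CD$.

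I do not anticipate a real obstacle: once the splitting of $\CW$ into its $1$-containing and $1$-avoiding parts is in place, the argument is a short finite case check. The only point needing care is, in each case, keeping track of whether the set at hand contains $1$, which pins down the part of $\CW$ it could belong to, together with the routine verification that every complement produced is genuinely in $\CP$. (Equivalently, one could first record that maximal intersecting subfamilies of $\CP$ are exactly the $\CP$-up-sets satisfying the complement property, reducing both directions to translating ``$\CW$ is a $\CP$-up-set'' into conditions (a) and (b); this is essentially the same case analysis.)
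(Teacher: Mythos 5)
Your proof is correct and follows essentially the same route as the paper: the same splitting of $(\CU-\CD)\cup\CD^c$ into its $1$-containing and $1$-avoiding parts, the same case analysis for the intersecting property, and the same reliance on Lemma~\ref{MR4717700}. The only cosmetic differences are that you establish maximality directly by exhibiting $Y^c$ as the blocking set (where the paper instead counts $\abs{(\CU-\CD)\cup\CD^c}=2^{n-1}-1$), and in the converse direction you lean on the $\CP$-up-set property where the paper sometimes uses the intersecting property together with Lemma~\ref{MR4717700}(i); both variants are sound.
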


\begin{proof}
	
	We prove the necessity and sufficiency respectively.  Firstly, if (ii) holds, since $ \abs{(\CU - \CD) \cup \CD^c} = 2^{n - 1} - 1 $, we only need to show that $ (\CU - \CD) \cup \CD^c $ is intersecting. Note that $ \CU - \CD \subseteq \CU $, hence $ \CU - \CD $ is already intersecting. We discuss in two cases:
		
		\begin{enumerate}[{Case} A:]
			\item $ D^c \in \CD^c $ and $ B \in \CU - \CD $.
			
			In this case, $ D \in \CD $. By (ii)(a) and $ B \notin \CD $, we have $ B \not\subseteq D $ (otherwise $ B \in \CD $), so $ B \cap D^c \ne \emptyset $.
			
			\item $ D_1^c, D_2^c \in \CD^c $.
			
			In this case, $ D_1, D_2 \in \CD $. By (ii)(b), we have $ D_1^c \cap D_2^c = (D_1 \cup D_2)^c \ne \emptyset $.
		\end{enumerate}
Thus we have (i) holds.		
		
Secondly, if (i) holds, we prove (a) and (b) of (ii), respectively.
		
		\begin{enumerate}[(a)]
			\item Let $ D \in \CD $ and $ D \supseteq D' \in \CU $. Then $ D' \cap D^c = \emptyset $. Since $ (\CU - \CD) \cup \CD^c $ is intersecting, we have $ D' \notin (\CU - \CD) \cup \CD^c $. By Lemma \ref{MR4717700}(i), we have $ {D'}^c \in (\CU - \CD) \cup \CD^c $, so $ D' \in \CD $.
			
			\item Let $ D_1, D_2 \in \CD $. Then $ D_1^c, D_2^c \in \CD^c $. Since $ (\CU - \CD) \cup \CD^c $ is intersecting, we have $ \CD^c $ is also intersecting, thus $ D_1^c \cap D_2^c \ne \emptyset $, i.e., $ D_1 \cup D_2 \ne [n]_1 $.
		\end{enumerate}
Thus we have (ii) holds. We complete the proof of this lemma.\qedhere	
\end{proof}

\begin{lemma}\label{n_minus_k}
	
	Let $ k \gs 4 $, $ 2 \ls m \ls \infty $ and $ n \gs k + q $. If $ \CB \subseteq \CP $ and $ \CB $ is maximal intersecting with respect to $ \CP $ with $ \cap\CB^\star = \emptyset $, then $ (\CU - \CB)(n - k) \ne \emptyset $.
	
\end{lemma}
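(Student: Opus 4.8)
The plan is to transfer the conclusion, via complementation, into a statement about $ k $-subsets of $ \CB $. Observe that a set $ B $ lies in $ (\CU - \CB)(n - k) $ precisely when $ \abs{B} = n - k $, $ 1 \in B $, and $ B \notin \CB $. For such a $ B $ the complement $ B^c = [n]_1 - B $ satisfies $ \abs{B^c} = k $ and $ 1 \notin B^c $. Since $ n \gs k + q \gs k + 1 $ (as $ q \gs 1 $), we have $ 1 \ls k \ls n - 1 $ and $ 1 \ls n - k \ls n - 1 $, so both $ B $ and $ B^c $ lie in $ \CP $; then Lemma~\ref{MR4717700}(i) tells us that $ B \notin \CB $ is equivalent to $ B^c \in \CB $. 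Consequently it is enough to exhibit a single $ k $-set that lies in $ \CB $ and does not contain $ 1 $: its complement will be the desired member of $ (\CU - \CB)(n - k) $.

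Such a $ k $-set I would build directly from the hypothesis $ \cap\CB^\star = \emptyset $. First, this hypothesis together with the convention $ \cap\emptyset \ne \emptyset $ forces $ \CB^\star \ne \emptyset $, and applying $ \cap\CB^\star = \emptyset $ to the element $ 1 $ yields some $ B_0 \in \CB^\star $ with $ 1 \notin B_0 $; by definition of $ \CB^\star $ we have $ q \ls \abs{B_0} \ls k $. Because $ 1 \notin B_0 $, the set $ [n]_1 - (B_0 \cup \cb{1}) $ has $ n - \abs{B_0} - 1 $ elements, and $ n - \abs{B_0} - 1 \gs k - \abs{B_0} $ holds exactly because $ n \gs k + 1 $; hence $ B_0 $ can be enlarged to a set $ B_0' $ with $ B_0 \subseteq B_0' \subseteq [n]_1 $, $ \abs{B_0'} = k $, and $ 1 \notin B_0' $. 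In particular $ B_0' \in \CP $.

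It remains to see that $ B_0' \in \CB $, and this is where maximality of $ \CB $ enters: by Lemma~\ref{MR4717700}(ii), $ \CB $ is a $ \CP $-up-set, so $ B_0 \in \CB $ and $ B_0 \subseteq B_0' \in \CP $ give $ B_0' \in \CB $. Now $ B_0' $ is a $ k $-set in $ \CB $ avoiding $ 1 $, so by the reduction of the first paragraph its complement $ (B_0')^c $ has $ \abs{(B_0')^c} = n - k $, satisfies $ 1 \in (B_0')^c $, and lies outside $ \CB $ (by Lemma~\ref{MR4717700}(i), since $ B_0' \in \CB $). Thus $ (B_0')^c \in (\CU - \CB)(n - k) $, so this family is non-empty.

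The argument is short and computation-free; the only points demanding attention are (i) verifying that every set constructed is a genuine non-empty proper subset of $ [n]_1 $, so that both clauses of Lemma~\ref{MR4717700} apply — all of these bounds reduce to $ n \gs k + 1 $, which is supplied by $ n \gs k + q $ — and (ii) faithfully translating ``maximal intersecting with respect to $ \CP $'' into the two structural facts that exactly one of $ B, B^c $ lies in $ \CB $ and that $ \CB $ is an up-set. I do not expect a genuine obstacle here; this lemma is a preparatory structural step feeding into the proof of Lemma~\ref{comparison}.
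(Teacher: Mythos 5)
Your proof is correct. It is essentially the contrapositive, made constructive, of the paper's argument: the paper sets $\CD := \CU - \CB$, supposes $\CD(n-k) = \emptyset$, and uses the $\CU$-down-set property of $\CD$ (via Lemma~\ref{down_set}) to conclude $(\CD^c)^\star = \emptyset$, hence $\CB^\star \subseteq \CU$, contradicting $\cap\CB^\star = \emptyset$; you instead extract directly from $\cap\CB^\star = \emptyset$ a set $B_0 \in \CB^\star$ avoiding $1$, enlarge it to a $k$-set $B_0'$ still avoiding $1$, invoke the $\CP$-up-set property (Lemma~\ref{MR4717700}(ii)) to place $B_0' \in \CB$, and complement to exhibit an explicit witness in $(\CU-\CB)(n-k)$. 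The two proofs hinge on the same two facts --- the up-set/down-set structure of maximal intersecting families and the complementation dichotomy of Lemma~\ref{MR4717700}(i) --- but yours bypasses Lemma~\ref{down_set} entirely and produces a concrete element rather than deriving a contradiction, which is arguably cleaner; all the cardinality checks you flag ($B_0'$ and $(B_0')^c$ lying in $\CP$, the extension being possible) indeed reduce to $n \gs k+1$, which follows from $n \gs k + q$ and $q \gs 1$.
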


\begin{proof}
	
	Set $ \CD := \CU - \CB $. Suppose $ \CD(n - k) = \emptyset $. By Lemma \ref{down_set}, we have $ \CD $ is a $ \CU $-down-set, thus $ \CD(n - k + 1) = \cdots = \CD(n - 1) = \emptyset $. So$$\CD^c = \pt{\bigcup_{i = 1}^{n - k - 1}\CD(i)}^c = \bigcup_{i = 1}^{n - k - 1}\CD^c(n - i) = \bigcup_{i = k + 1}^{n - 1}\CD^c(i).$$
	Since $ [q, k] \cap [k + 1, n - 1] = \emptyset $, we have $ \pt{\CD^c}^\star = \bigcup\limits_{i = q}^{k}\CD^c(i) = \emptyset $. Then $ \CB^\star = \pt{\CU - \CD}^\star \cup \pt{\CD^c}^\star = \pt{\CU - \CD}^\star \subseteq \CU $, contradict to $ \cap\CB^\star = \emptyset $. \qedhere
	
\end{proof}

The following Lemma characterizes the layers of $ \CV $.

\begin{lemma}\label{comparison}
	
	Let $ k \gs 4 $, $ 2 \ls m \ls \infty $ and $ n \gs k + q $. Let $ \CB \subseteq \CP $ and $ \CB $ is maximal intersecting with respect to $ \CP $ with $ \cap\CB^\star = \emptyset $. If $ 2 \ls \ell \ls w := \min\cb{k, \fl{n/2}} $, then $ \abs{\CV(\ell)} \gs \abs{\CB(\ell)} $.
	
\end{lemma}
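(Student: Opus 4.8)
The plan is to bound $\abs{\CB(\ell)}$ by working on the single layer $\CB(\ell)$, which is an intersecting family of $\ell$-subsets of $[n]_1$, and to split into the cases where $\CB(\ell)$ is trivial or non-trivial. I would first record that $2 \ls \ell \ls w \ls \fl{n/2}$ forces $n \gs 2\ell$; this is what lets me invoke the ordinary Hilton--Milner theorem on that layer.

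\emph{Non-trivial case.} Suppose $\CB(\ell)$ is non-trivial intersecting. If $\ell \gs 3$, I would apply Theorem~\ref{HM67} with ground set $[n]_1$ and uniformity $\ell$ — legitimate since $n \gs 2\ell \gs 6$ — to get $\abs{\CB(\ell)} \ls \binom{n-1}{\ell-1} - \binom{n-\ell-1}{\ell-1} + 1$, and by Lemma~\ref{computation} the right-hand side is $\ls \abs{\CV(\ell)}$. If $\ell = 2$, an intersecting family of $2$-subsets of $[n]_1$ with empty total intersection lies inside a triangle $\cb{\cb{a,b},\cb{b,c},\cb{c,a}}$, so $\abs{\CB(2)} \ls 3 = \binom{n-1}{1} - \binom{n-3}{1} + 1 \ls \abs{\CV(2)}$, again by Lemma~\ref{computation}.

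\emph{Trivial case.} If $\CB(\ell) = \emptyset$ there is nothing to prove, so fix $x \in \cap\CB(\ell)$. Since $\cap\CB^\star = \emptyset$ — hence $\CB^\star \ne \emptyset$ by the convention $\cap\emptyset \ne \emptyset$ — there is a $B_0 \in \CB^\star$ with $x \notin B_0$, and $j := \abs{B_0}$ satisfies $j \ls k$ because $B_0 \in \bigcup_{i=q}^{k}\CB(i)$. As $\CB$ is intersecting, every $A \in \CB(\ell)$ has $A \cap B_0 \ne \emptyset$; together with $x \in A$ and $x \notin B_0$, this shows $\CB(\ell)$ is contained in the family of $\ell$-subsets of $[n]_1$ that contain $x$ and meet $B_0$, a family of size $\binom{n-1}{\ell-1} - \binom{n-j-1}{\ell-1}$. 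Since $j \ls k$ gives $\binom{n-j-1}{\ell-1} \gs \binom{n-k-1}{\ell-1}$, we get $\abs{\CB(\ell)} \ls \binom{n-1}{\ell-1} - \binom{n-k-1}{\ell-1}$. On the other hand $\CV(\ell) = (\CU(\ell) - \CR(\ell)) \cup \CR(n-\ell)^c$ with $\abs{\CU(\ell)} = \binom{n-1}{\ell-1}$, $\abs{\CR(\ell)} = \binom{n-k-1}{\ell-1}$, and $\abs{\CR(n-\ell)} = 1$ when $\ell = k$ (which forces $n \gs 2k$) and $0$ otherwise — exactly the computation in the proof of Lemma~\ref{computation}, equations~(\ref{eqn1}) and~(\ref{eqn2}). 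Hence $\abs{\CV(\ell)} = \binom{n-1}{\ell-1} - \binom{n-k-1}{\ell-1}$ for $\ell < k$ and one larger for $\ell = k$, and in either case $\abs{\CB(\ell)} \ls \abs{\CV(\ell)}$.

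I expect the trivial case to be the only real obstacle: the point is to convert the one global hypothesis $\cap\CB^\star = \emptyset$ into a concrete local witness — a set $B_0 \in \CB$ of cardinality at most $k$ that avoids the common point of $\CB(\ell)$ — and then to see that the crude count this produces matches $\abs{\CV(\ell)}$ exactly. This route uses neither Lemma~\ref{down_set} nor Lemma~\ref{n_minus_k}; an alternative is to write $\CB = (\CU - \CD) \cup \CD^c$ for the $\CU$-down-set $\CD := \CU - \CB$, control $\CD$ through those two lemmas, and argue from the identity $\abs{\CB(\ell)} = \binom{n-1}{\ell-1} - \abs{\CD(\ell)} + \abs{\CD(n-\ell)}$.
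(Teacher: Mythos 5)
Your proof is correct, and it takes a genuinely different route from the paper's in the key case. The paper argues by contradiction: assuming $\abs{\CB(\ell_0)} > \abs{\CV(\ell_0)}$, it combines Lemma~\ref{computation} with Theorem~\ref{HM67} to conclude $\cap\CB(\ell_0) \ne \emptyset$, and then invokes the structural machinery $\CB = (\CU - \CD) \cup \CD^c$: Lemma~\ref{n_minus_k} gives $\CD(n-k) \ne \emptyset$, Lemma~\ref{down_set} makes $\CD$ a $\CU$-down-set so that $\CR \subseteq \CD$ (after a second normalization), and the bound $\abs{\CB(\ell_0)} \ls \binom{n-1}{\ell_0-1} - \binom{n-k-1}{\ell_0-1} \ls \abs{\CV(\ell_0)}$ follows. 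You replace that entire structural step by a direct witness argument: in the trivial case you extract from $\cap\CB^\star = \emptyset$ a set $B_0 \in \CB^\star$ of size $j \ls k$ avoiding the common point $x$, and the resulting count $\binom{n-1}{\ell-1} - \binom{n-j-1}{\ell-1} \ls \binom{n-1}{\ell-1} - \binom{n-k-1}{\ell-1}$ matches $\abs{\CV(\ell)}$ via equations~(\ref{eqn1}) and~(\ref{eqn2}). This is more self-contained (it uses neither Lemma~\ref{down_set} nor Lemma~\ref{n_minus_k}, nor even the maximality of $\CB$ beyond its being intersecting on the relevant layers), whereas the paper's route reuses machinery it needs anyway for Lemma~\ref{structure}. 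You also handle the $\ell = 2$ instance of the non-trivial case separately with the triangle bound $\abs{\CB(2)} \ls 3$ — a point worth making explicit, since Theorem~\ref{HM67} as stated requires uniformity at least $3$ and the paper's proof applies it to $\CB(\ell_0)$ without excluding $\ell_0 = 2$.
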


\begin{proof}
	
	Suppose there exists $ 2 \ls \ell_0 \ls w $, such that $ \abs{\CB(\ell_0)} > \abs{\CV(\ell_0)} $. By Lemma \ref{computation}, we have $ \abs{\CB(\ell_0)} > \binom{n - 1}{\ell_0 - 1} - \binom{n - \ell_0 - 1}{\ell_0 - 1} + 1 $. By Theorem \ref{HM67}, we have $ \cap\CB(\ell_0) \ne \emptyset $. Without loss of generality, we may assume $ 1 \in \cap\CB(\ell_0) $, i.e., $ \CB(\ell_0) \subseteq \CU(\ell_0) $. Set $ \CD := \CU - \CB $. Because $ \CB = \pt{\CU - \CD} \cup \CD^c $, we have $$\CB(\ell_0) = \CU(\ell_0) - \CD(\ell_0) \cup \CD(n - \ell_0)^c.$$ By $ \CB(\ell_0) \subseteq \CU(\ell_0) $, we have $ \CD(n - \ell_0) = \emptyset $. By Lemma \ref{n_minus_k}, we have $ \CD(n - k) \ne \emptyset $, thus $ \ell_0 < k $. Without loss of generality, we may assume $ [n - k]_1 \in \CD(n - k) $. By Lemma \ref{down_set}, we have $ \CD $ is a $ \CU $-down-set, hence $ \CR \subseteq \CD $. Then
	\begin{align*}
		\abs{\CB(\ell_0)}
		&= \abs{\CU(\ell_0)} - \abs{\CD(\ell_0)} \\
		&\ls \abs{\CU(\ell_0)} - \abs{\CR(\ell_0)} \\
		&= \binom{n - 1}{\ell_0 - 1} - \binom{n - k - 1}{\ell_0 - 1} \\
		&\ls \abs{\CV(\ell_0)},
	\end{align*} contradict to $ \abs{\CB(\ell_0)} > \abs{\CV(\ell_0)} $. \qedhere
	
\end{proof}

The following Lemma characterizes the layers of $ \CV $ a step further.

\begin{lemma}\label{structure}
	
	Let $ k \gs 4 $, $ 2 \ls m \ls \infty $ and $ n \gs k + q $. If $ \CB \subseteq \CP $ and $ \CB $ is maximal intersecting with respect to $ \CP $ with $ \cap\CB^\star = \emptyset $, then $ \abs{\CB(q)} = \abs{\CV(q)} $ only if $ \CB^\star \cong \CV^\star $.	
	
\end{lemma}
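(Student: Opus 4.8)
\emph{Proof proposal.}

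The plan is to prove the sharper statement that, after replacing $\CB$ by a suitable isomorphic copy, one has $\CB^\star = \CV^\star$ on the nose. Write $\CD := \CU - \CB$; since $\CB$ is maximal intersecting, Lemma~\ref{MR4717700}(i) gives $\CB = (\CU - \CD) \cup \CD^c$, hence $\CB(\ell) = \pt{\CU(\ell) - \CD(\ell)} \cup \CD(n-\ell)^c$ for every $\ell$, exactly as in the proof of Lemma~\ref{comparison}. Since $k \gs 4$ and $m \gs 2$, $q = \ce{k/m} \ls \ce{k/2} \ls k-2$, so $q < k$; when $q = 1$ (i.e.\ $m \gs k$) the problem reduces to the unbounded case of Theorem~\ref{MP16}, so we may assume $q \gs 2$. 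I first check $q < w$: otherwise $q = w = \min\cb{k, \fl{n/2}}$, and $q < k$ forces $w = \fl{n/2}$, so $q = \fl{n/2}$ and $n \ls 2q+1$, which with $n \gs k + q \gs (q+1) + q = 2q+1$ gives $n = 2q+1$ and $k = q+1$, contradicting $q \ls k-2$. Thus $2 \ls q < w$, and the ``moreover'' part of Lemma~\ref{computation} gives $\abs{\CV(q)} > \binom{n-1}{q-1} - \binom{n-q-1}{q-1} + 1$.

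Next I replay the argument of Lemma~\ref{comparison} in the equality case. By hypothesis $\abs{\CB(q)} = \abs{\CV(q)} > \binom{n-1}{q-1} - \binom{n-q-1}{q-1} + 1$, so the nonempty intersecting family $\CB(q)$ cannot be non-trivial; by Theorem~\ref{HM67} — for $q = 2$, by the elementary bound that a non-trivial intersecting family of pairs has at most $3$ members — we get $\cap\CB(q) \ne \emptyset$, and a permutation of $[n]_1$ lets us assume $1 \in \cap\CB(q)$, i.e.\ $\CB(q) \subseteq \CU(q)$. Then $\CD(n-q)^c \subseteq \CB(q) \subseteq \CU(q)$, while no complement of a member of $\CD \subseteq \CU$ contains $1$; hence $\CD(n-q) = \emptyset$. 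By Lemma~\ref{n_minus_k}, $\CD(n-k) \ne \emptyset$; fixing one of its members and applying a further permutation fixing $1$, we may assume $[n-k]_1 \in \CD(n-k)$, and then Lemma~\ref{down_set} shows $\CD$ is a $\CU$-down-set with $\CR \subseteq \CD$. Therefore
\[
\abs{\CB(q)} = \abs{\CU(q)} - \abs{\CD(q)} \ls \abs{\CU(q)} - \abs{\CR(q)} = \binom{n-1}{q-1} - \binom{n-k-1}{q-1} = \abs{\CV(q)},
\]
the last equality being \eqref{eqn1} or \eqref{eqn2} (valid since $2 \ls q < k$). Equality therefore holds throughout, so $\abs{\CD(q)} = \abs{\CR(q)}$, and with $\CR(q) \subseteq \CD(q)$ we conclude $\CD(q) = \CR(q)$.

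The crux — and the one place where $q \gs 2$ is indispensable — is to boost this to $\CD(i) = \CR(i)$ for all $i \gs q$. The inclusion $\supseteq$ is $\CR \subseteq \CD$. For $\subseteq$, let $D \in \CD(i)$ with $i \gs q$ and suppose $D \not\subseteq [n-k]_1$; pick $x \in D - [n-k]_1$, so $x \ne 1$ since $1 \in [n-k]_1$. As $\abs{D} = i \gs q \gs 2$ and $1, x \in D$, there is a $q$-set $D'$ with $1, x \in D' \subseteq D$; the $\CU$-down-set property forces $D' \in \CD(q) = \CR(q)$, i.e.\ $D' \subseteq [n-k]_1$, contradicting $x \in D'$. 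Hence $D \in \CR(i)$. Finally, for $q \ls i \ls k$ one has $i \gs q$ and $n - i \gs n-k \gs q$, so $\CD(i) = \CR(i)$ and $\CD(n-i) = \CR(n-i)$, whence $\CB(i) = \pt{\CU(i) - \CD(i)} \cup \CD(n-i)^c = \pt{\CU(i) - \CR(i)} \cup \CR(n-i)^c = \CV(i)$. Thus $\CB^\star = \bigcup_{i=q}^{k}\CB(i) = \bigcup_{i=q}^{k}\CV(i) = \CV^\star$, so in particular $\CB^\star \cong \CV^\star$.

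The parameter bookkeeping of the first two paragraphs is routine once Lemmas~\ref{computation}, \ref{n_minus_k}, \ref{down_set} and \ref{comparison} are available; the genuinely new ingredient is the down-set bootstrap of the third paragraph, which I expect to be the main step. The subtlety to keep in mind is that this bootstrap collapses when $q = 1$ — there is then no $q$-set containing two prescribed points — which is exactly why it is essential that the layers below $q$ play no role, i.e.\ that $\CB^\star$ begins at level $q \gs 2$.
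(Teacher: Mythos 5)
Your proof is correct for $q \gs 2$, and its main structural step takes a genuinely different route from the paper's. The paper, after normalizing $1 \in \cap\CB(q)$ and (via Lemma~\ref{n_minus_k}) $H \in \CB(k)$, uses maximality of $\CB$ to force $\CB(q) \subseteq \cb{B \in \CU(q): B \cap H \ne \emptyset} = \CV(q)$, hence $\CB(q) = \CV(q)$; it then climbs to the higher layers using the $\CP$-up-set property of $\CB$ together with the counting bound of Lemma~\ref{comparison}, splitting into the cases $n \gs 2k$ and $n < 2k$ and treating only $\ell \ls w$ explicitly (the layers $w < \ell \ls k$ being determined implicitly by complementation). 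You instead extract the equality case of Lemma~\ref{comparison} to get $\CD(q) = \CR(q)$ and then bootstrap through the $\CU$-down-set property of $\CD$: any $D \in \CD(i)$ with $i \gs q \gs 2$ meeting $[n]_1 - [n-k]_1$ would contain a $q$-subset through $1$ and such a point, which would have to lie in $\CD(q) - \CR(q)$. This gives $\CD(i) = \CR(i)$ for all $i \gs q$, hence $\CB(\ell) = \CV(\ell)$ for all $q \ls \ell \ls k$ in one stroke, with no case split, and with the stronger conclusion $\CB^\star = \CV^\star$ after normalization. Your handling of $q = 2$ (where Theorem~\ref{HM67} does not literally apply) by the elementary bound on non-trivial intersecting families of pairs is a point the paper glosses over.

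One caveat: your dismissal of $q = 1$ (``reduces to Theorem~\ref{MP16}'') is not a proof of the lemma in that case. The lemma is a statement about families in $\CP$, and for $q = 1$ its hypothesis degenerates to $\CB(1) = \emptyset$ (since $\CV(1) = \emptyset$), which is insufficient: the family $\cb{B \in \CP: \abs{B \cap [3]_1} \gs 2}$ is maximal intersecting with empty $1$-uniform part and $\cap\CB^\star = \emptyset$, yet its valuable part is not isomorphic to $\CV^\star$. This is, however, a defect of the lemma's statement rather than of your argument: the paper's own proof also silently assumes $q \gs 2$ (it invokes Lemma~\ref{computation} at $\ell = q$, which requires $\ell \gs 2$, and its superset argument at higher layers collapses when $\CB(q) = \emptyset$). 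You correctly identified $q \gs 2$ as the essential hypothesis; for $q \gs 2$ your proof is complete.
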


\begin{proof}
	
	Recall that $ H := [n - k + 1, n] $. Set $ \CD := \CU - \CB $ and $ w := \min\cb{k, \fl{n/2}} $. By Lemma \ref{computation} and $ q < w $, we have$$\abs{\CB(q)} = \abs{\CV(q)} > \binom{n - 1}{q - 1} - \binom{n - q - 1}{q - 1} + 1.$$ By Theorem \ref{HM67}, we have $ \cap\CB(q) \ne \emptyset $. Without loss of generality, we may assume $ 1 \in \cap \CB(q) $. By Lemma \ref{n_minus_k}, we have $ \CD(n - k) \ne \emptyset $, and note that $ \CB = (\CU - \CD) \cup \CD^c $, we have$$\CB(k) \supseteq \CD^c(k) = \CD(n - k)^c.$$	Wituout loss of generality, we may assume $ H \in \CB(k) $. Because $ \CB $ is maximal intersecting with respect to $ \CP $, we have$$\CB(q) \subseteq \CV(q) = \cb{B \in \CU(q): B \cap H \ne \emptyset}.$$ Hence $ \CB(q) = \CV(q) $.
	
	Let $ q < \ell \ls w $. We need to show that $ \CB(\ell) = \CV(\ell) $. We discuss in two cases:
	
	\begin{enumerate}[{Case} A:]
		\item $ n \gs 2k $.
		
		In this case, we have $ w = k $ and so $ q < \ell \ls k $. Note that any member of$$\cb{B \in \CU(\ell): B \cap H \ne \emptyset}$$ is a superset of certain member of $ \CB(q) $. By Lemma \ref{MR4717700}(ii), we have $ \CB $ is a $ \CP $-up-set, then$$\CB(\ell) \supseteq \left\{\begin{aligned}
			&\cb{B \in \CU(\ell): B \cap H \ne \emptyset} =: S_1,&\qquad&\text{ if } \ell < k,\\
			&\cb{B \in \CU(k): B \cap H \ne \emptyset} \cup \cb{H} =: S_2,&\qquad&\text{ if } \ell = k.
		\end{aligned}\right.$$
		
		\noindent By (\ref{eqn1}), we have $ \abs{S_i} = \abs{\CV(\ell)} $ ($ i = 1, 2 $). Note that $ \abs{\CB(\ell)} \gs \abs{S_i} $. By Lemma \ref{comparison}, we have $ \abs{\CV(\ell)} \gs \abs{\CB(\ell)} $. Thus $ \abs{\CB(\ell)} = \abs{S_i} $, which implies that $ \CB(\ell) = \CV(\ell) $.
		
		\item $ n < 2k $.
		
		In this case, we have $ w = \fl{n/2} $ and so $ q < \ell \ls \fl{n/2} $. Note that any member of$$\left\{\begin{aligned}
			&\cb{B \in \CU(\ell): B \cap H \ne \emptyset},&\qquad&\text{ if } \ell \ls n - k,\\
			&\CU(\ell),&\qquad&\text{ if } n - k < \ell \ls \fl{n/2}.
		\end{aligned}\right.$$ is a superset of certain member of $ \CB(q) $. By Lemma \ref{MR4717700}(ii), we have $ \CB $ is a $ \CP $-up-set, then$$\CB(\ell) \supseteq \left\{\begin{aligned}
			&\cb{B \in \CU(\ell): B \cap H \ne \emptyset} =: S_3,&\qquad&\text{ if } \ell \ls n - k,\\
			&\CU(\ell) =: S_4,&\qquad&\text{ if } n - k < \ell \ls \fl{n/2}.
		\end{aligned}\right.$$
		
		\noindent By (\ref{eqn2}), we have $ \abs{S_i} = \abs{\CV(\ell)} $ ($ i = 3, 4 $). Note that $ \abs{\CB(\ell)} \gs \abs{S_i} $. By Lemma \ref{comparison}, we have $ \abs{\CV(\ell)} \gs \abs{\CB(\ell)} $. Thus $ \abs{\CB(\ell)} = \abs{S_i} $, which implies that $ \CB(\ell) = \CV(\ell) $. \qedhere
	\end{enumerate}
	
\end{proof}
\section{Proof of the main theorem}\label{sec3}

\begin{notation}
	
	Fix $ m \in \BN^+ \cup \cb{\infty} $. The coefficient of $ x^k $ in the polynomial $ \pt{\sum\limits_{i = 1}^{m} x^i}^\ell $ is denoted by $ C_{k, \ell} $.	
	
\end{notation}

Recall that $ q := \ce{k/m} $.

\begin{lemma}[\cite{MR4717700}, Lemma 2.1 and Corollary 2.8]\label{key}
	
	$ C_{k, \ell} $ satisfy the following four  properties.
	
	\begin{enumerate}[(i)]
		\item $ C_{k, \ell} > 0 $ if and only if $ q \ls \ell \ls k $.
		
		\item $ C_{k, q} = 1 $ if and only if $ \min\cb{k, m} \mid k $.
		
		\item $ C_{k, k} \equiv 1 $.
		
		\item If $ n \gs k + q $ and $ q \ls \ell \ls \min\cb{k, \fl{n/2}} $, then $ C_{k, \ell} \gs C_{k, n - \ell} $.
	\end{enumerate}	
	
\end{lemma}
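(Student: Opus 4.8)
The plan is to read $C_{k,\ell}$ off combinatorially and then dispatch the four items, the last being the only substantive one. First I record that $C_{k,\ell}$ is the number of ordered tuples $\pt{a_1,\dots,a_\ell}$ with $1\ls a_i\ls m$ for all $i$ and $\sum_i a_i=k$, that is, the number of compositions of $k$ into $\ell$ parts of size at most $m$; equivalently, via $b_i:=a_i-1\in\cb{0,\dots,m-1}$, the number of weak compositions of $k-\ell$ into $\ell$ parts each at most $m-1$, so that $C_{k,\ell}=\binom{k-1}{\ell-1}$ whenever $k-\ell\ls m-1$ (the cap being then inactive) and in general $C_{k,\ell}=\sum_{j\gs0}(-1)^j\binom{\ell}{j}\binom{k-jm-1}{\ell-1}$ by inclusion--exclusion. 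When $m=\infty$ we have $q=1$ and $C_{k,\ell}=\binom{k-1}{\ell-1}$ for every $1\ls\ell\ls k$, and then (i)--(iv) are immediate from the positivity and unimodality of binomial coefficients; so all the work is in the bounded case.

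For (i), each of the $\ell$ parts lies in $[1,m]$, whence $\ell\ls\sum_i a_i=k\ls\ell m$, that is $q=\ce{k/m}\ls\ell\ls k$; conversely for any such $\ell$ the greedy tuple (as many $m$'s as fit, one remainder part, then $1$'s) shows $C_{k,\ell}>0$. For (iii), $\ell=k$ forces every part to be $1$, so $C_{k,k}=1$. For (ii), if $m\gs k$ then $q=1$, the single part equals $k\ls m$, so $C_{k,q}=1$ while $\min\cb{k,m}=k\mid k$, so both sides hold; if $m<k$ then $\min\cb{k,m}=m$ and $q\gs2$, and with $s:=qm-k\in[0,m-1]$ the substitution $b_i\mapsto(m-1)-b_i$ identifies the tuples counted by $C_{k,q}$ with the weak compositions of $s$ into $q$ parts each at most $m-1$; since $s\ls m-1$ the cap never binds, so $C_{k,q}=\binom{s+q-1}{q-1}$, which equals $1$ exactly when $s=0$, i.e.\ when $m\mid k$, and is at least $\binom{q}{q-1}=q\gs2$ when $s\gs1$.

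Item (iv) is the heart. We may assume $q\ls n-\ell\ls k$, else $C_{k,n-\ell}=0$ and there is nothing to prove; then with $a:=\ell$, $b:=n-\ell$ the hypotheses $\ell\ls\fl{n/2}$, $\ell\gs q$, $n-\ell\ls k$, $n\gs k+q$ read precisely $q\ls a\ls b\ls k$ and $a+b\gs k+q$, so it suffices to show that $q\ls a\ls b\ls k$ and $a+b\gs k+q$ imply $C_{k,a}\gs C_{k,b}$. I would argue from $C_{k,j}=[x^{k-j}]\,g(x)^j$ with $g(x)=1+x+\cdots+x^{m-1}=\tfrac{1-x^m}{1-x}$: each $g^j$ has nonnegative coefficients and is palindromic of degree $(m-1)j$, and a product of palindromic unimodal polynomials with nonnegative coefficients is again palindromic and unimodal (write $g$ in the $x^i+\cdots+x^{m-1-i}$ basis and multiply), so $g^j$ is unimodal with centre $(m-1)j/2$; hence $k'\mapsto C_{k',j}$ is unimodal with peak at $k'=(m+1)j/2$. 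Now factor $g^b=g^a\cdot g^{b-a}$: this writes $C_{k,b}$ as a sum, weighted by the coefficients of the palindromic polynomial $g^{b-a}$ (of total weight $g^{b-a}(1)=m^{b-a}$), of values $C_{k',a}$ whose indices $k'$ are arranged symmetrically about $k-(b-a)(m+1)/2$; the inequality $a+b\gs k+q$, equivalently $k-a\ls b-q$, controls how far $k$ sits from the peak $(m+1)a/2$ of $k'\mapsto C_{k',a}$ and is exactly what forces each $C_{k',a}$ occurring to be dominated so that the weighted sum is $\ls C_{k,a}$. The $m=\infty$ shadow is the clean sanity check: there $a+b\gs k+1$ places $a-1$ at least as close to $(k-1)/2$ as $b-1$, and $\binom{k-1}{a-1}\gs\binom{k-1}{b-1}$ by unimodality of binomial coefficients.

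The main obstacle is precisely this cross-layer comparison: $j\mapsto C_{k,j}$ is genuinely unimodal with an $m$-dependent peak and is not monotone, so one cannot get away with comparing only consecutive layers $C_{k,\ell}$ and $C_{k,\ell+1}$; the full strength of $n\gs k+q$ together with $\ell\ls\fl{n/2}$ must be used at once to locate $\ell$ and $n-\ell$ relative to the peak. Should the coefficient bookkeeping prove awkward, an alternative is an explicit injection from the compositions counted by $C_{k,b}$ into those counted by $C_{k,a}$ obtained by iteratively merging two carefully chosen parts, with $a\gs q$ and $a+b\gs k+q$ guaranteeing that some merge keeping all parts $\ls m$ is available and a canonical choice of merge making the map reversible. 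The statement is in any case Lemma~2.1 and Corollary~2.8 of \cite{MR4717700}.
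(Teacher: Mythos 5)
Your treatments of (i), (ii) and (iii) are correct and self-contained: the bounds $\ell\ls k\ls \ell m$ give (i), the all-ones composition gives (iii), and the complementation $b_i\mapsto (m-1)-b_i$ reducing $C_{k,q}$ to $\binom{s+q-1}{q-1}$ with $s=qm-k$ gives (ii) cleanly. Note, for what it is worth, that the paper itself offers no proof of this lemma --- it is imported verbatim from \cite{MR4717700} --- so there is no internal argument to compare against; a complete self-contained proof would be a genuine addition.

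The problem is (iv), which you correctly identify as the heart and then do not actually prove. After reducing to ``$q\ls a\ls b\ls k$ and $a+b\gs k+q$ imply $C_{k,a}\gs C_{k,b}$'' (a correct reduction) and writing $C_{k,b}=[x^{k-b}]g^ag^{b-a}=\sum_i c_i\,[x^{k-b-i}]g^a$ with $g^{b-a}=\sum_i c_ix^i$, you assert that $a+b\gs k+q$ ``forces each $C_{k',a}$ occurring to be dominated so that the weighted sum is $\ls C_{k,a}$.'' This inference is invalid as stated: the weights $c_i$ sum to $m^{b-a}>1$, so knowing that each individual term $[x^{k-b-i}]g^a$ is at most $[x^{k-a}]g^a$ gives only the useless bound $m^{b-a}C_{k,a}$. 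Nor is there any general principle that a weighted sum of coefficients strictly to the left of a position is bounded by the coefficient at that position, even for symmetric unimodal polynomials and positions left of the peak: for $(1+x)^{10}$ one has $[x^3]+[x^4]=120+210=330>252=[x^5]$. In the smallest genuine instance of your decomposition ($m=2$, $k=6$, $a=4$, $b=5$, so $a+b=k+q$ exactly) the required inequality is $[x^0](1+x)^4+[x^1](1+x)^4\ls[x^2](1+x)^4$, i.e.\ $1+4\ls 6$, which holds by a margin of $1$ and not because either summand is individually dominated. So the step where the hypothesis $a+b\gs k+q$ is actually consumed is missing; it must be replaced by a real argument (for instance an induction on $b-a$ with a quantitative one-step estimate, or the explicit parts-merging injection you mention as a fallback), and your closing appeal to \cite{MR4717700} concedes as much.
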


\begin{notation}
	
	Let $ k, n \in \BN^+ $, $ m \in \BN^+ \cup \cb{\infty} $ and $ A \in \binom{[n]_m}{k} $.
	
	\begin{enumerate}[(1)]
		\item The \emph{support} of $ A $ is denoted by $ \varphi(A) $ and defined by $ A \cap [n]_1 $. Note that $ \varphi $ is a map from $ \binom{[n]_m}{k} $ to $ \CP $.
		
		\item The \emph{preimage} of $ B $ is denoted by $ \varphi^{-1}(B) $ and defined by $ \cb{A \in \binom{[n]_m}{k}: \varphi(A) = B} $.
		
		\item The \emph{preimage} of $ \CB $ is denoted by $ \varphi^{-1}(\CB) $ and defined by $ \bigcup\limits_{B \in \CB} \varphi^{-1}(B) $.
	\end{enumerate}	
	
\end{notation}

\begin{lemma}
	
	Let $ k \in \BN^+ $, $ m \in \BN^+ \cup \cb{\infty} $ and $ n \gs k + q $. Then $ \CH_{n, k}^{m} $ is maximal intersecting with respect to $ \binom{[n]_m}{k} $.
	
\end{lemma}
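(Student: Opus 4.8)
The plan is to verify directly the two clauses in the definition of ``maximal intersecting with respect to $\binom{[n]_m}{k}$''. (One could instead observe that the corresponding family $\CV=(\CU-\CR)\cup\CR^c\subseteq\CP$ is maximal intersecting --- $\CR$ is a $\CU$-down-set and any two of its members lie in $[n-k]_1\subsetneq[n]_1$, so Lemma~\ref{down_set} applies --- but transferring maximality back along $\varphi$ is a little awkward owing to the constraint $q\ls|\varphi(A)|\ls k$, so I prefer the direct route.) That $\CH_{n,k}^m$ is intersecting is immediate: writing $\CH_{n,k}^m=\CF\cup\{H\}$ with $\CF=\{A\in\binom{[n]_m}{k}:1\in A,\ A\cap H\neq\emptyset\}$, two members of $\CF$ meet in $1$, a member of $\CF$ meets $H$ by definition, and $H$ meets itself.

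For maximality, fix $X\in\binom{[n]_m}{k}\setminus\CH_{n,k}^m$ and look for $A\in\CH_{n,k}^m$ with $A\cap X=\emptyset$; since multiset intersection only sees supports this is the same as $\varphi(A)\cap\varphi(X)=\emptyset$. If $\varphi(X)\cap H=\emptyset$, take $A=H$. Otherwise $X\cap H\neq\emptyset$, so $X\notin\CH_{n,k}^m$ forces $1\notin X$ and $X\neq H$; since a $k$-multiset whose support is $H$ must equal $H$ we get $\varphi(X)\neq H$, and then $|\varphi(X)|\ls k=|H|$ produces some $h\in H\setminus\varphi(X)$. Because $n>k$ we have $1\notin H$, so $1\neq h$ and both lie in $S:=[n]_1\setminus\varphi(X)$; thus $|S|\gs2$, and also $|S|=n-|\varphi(X)|\gs n-k\gs q$. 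Set $t:=\max\{2,q\}$, so that $t\ls|S|$ and $t\ls k$. Choose $T$ with $\{1,h\}\subseteq T\subseteq S$ and $|T|=t$, and put multiplicities in $\{1,\dots,m\}$ on the coordinates in $T$ summing to $k$ --- possible since $t\ls k\ls tm$. The resulting $A\in\binom{[n]_m}{k}$ has $\varphi(A)=T$, hence $1\in A$ and $\emptyset\neq\{h\}\subseteq A\cap H$, so $A\in\CH_{n,k}^m$; and $\varphi(A)=T\subseteq S$ gives $A\cap X=\emptyset$.

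I do not expect a genuine obstacle; the care needed is only with degenerate parameters. When $k=1$ the second case cannot occur --- any $X$ of size $1$ that meets $H$ equals $H$ --- so in the construction we always have $k\gs2$, whence $t=\max\{2,q\}\ls k$; and $k\ls tm$ holds because $t\gs q=\ce{k/m}$ gives $tm\gs qm\gs k$. The last ingredient is the elementary fact that a nonnegative integer vector with exactly $t$ positive entries, each at most $m$, summing to $k$ exists precisely when $t\ls k\ls tm$.
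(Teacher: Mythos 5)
Your proof is correct and follows essentially the same route as the paper's: both arguments reduce maximality to finding, for each excluded $X$, a member of $\CH_{n,k}^m$ whose support contains $1$ and some $h\in H\setminus\varphi(X)$ and avoids $\varphi(X)$, then filling up to cardinality $k$ within the multiplicity bound. The only difference is cosmetic --- the paper concentrates $\min\{m,k-1\}$ copies on the symbol $1$ and adds $\max\{1,k-m\}$ further symbols, whereas you spread the multiplicities over a support of size $\max\{2,q\}$; your handling of the degenerate cases ($k=1$, $m=\infty$) is careful and sound.
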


\begin{proof}
	
	Recall that $ H := [n - k + 1, n] $. Note that $ \binom{[n]_k}{k} = \binom{[n]_\infty}{k} $, thus we may assume $ m \ls k $. It suffices to show that for any $ X \in \binom{[n]_m}{k} - \CH_{n, k}^{m} $, there exists $ A \in \CH_{n, k}^{m} $, such that $ A \cap X = \emptyset $. Since $ H \in \CH_{n, k}^{m} $ and $ X \notin \CH_{n, k}^{m} $, we may assume that $ X \cap H \ne \emptyset $ and $ 1 \notin X $. Set$$S := \{m\cdot i: i \notin X \cup \cb{1}\}.$$ Note that $ \abs{S} = m \cdot \pt{n - 1 - \abs{\varphi(X)}} $. We discuss in three cases:
	
	\begin{enumerate}[{Case} A:]
		\item $ m < k $. Now $ \abs{S} \gs m \cdot \pt{n - 1 - k} \gs m \cdot \pt{q - 1} \gs k - m \gs 1 $.
		
		\item $ m = k $ and $ n \gs k + 2 $. Now $ \abs{S} \gs m \cdot \pt{n - 1 - k} \gs m \gs 1 $.
		
		\item $ m = k $ and $ n = k + 1 $. Now $ \abs{\varphi(X)} < k $, hence $ \abs{S} \gs m \cdot \pt{n - 1 - k + 1} = m \gs 1 $.
	\end{enumerate}
	Since $ H \not\subseteq X $, we have $ H \cap S \ne \emptyset $, thus there exists $ A' \in \binom{S}{\max\cb{1, k - m}} $, such that $ A' \cap H \ne \emptyset $. Set $ A := \cb{\min\cb{m, k - 1}\cdot 1 }\cup A' $. Then $ A \cap X = \emptyset $.\qedhere
	
\end{proof}

The following two Lemmas relate the intersecting families in $ \binom{[n]_m}{k} $ and the intersecting families in $ \CP $.

\begin{lemma}[\cite{MR4717700}, {Lemma 3.3 and Lemma 3.4}]\label{relate}
	
	Let $ \CA \subseteq \binom{[n]_m}{k} $ and $ \CA $ is maximal intersecting with respect to $ \binom{[n]_m}{k} $. Let $ \CB \subseteq \CP $ and $ \CB $ is maximal intersecting with respect to $ \CP $. If $ \varphi(\CA) \subseteq \CB $, then $ \CA = \varphi^{-1}(\CB) = \sum\limits_{\ell = q}^{n - q} C_{k, \ell} \cdot \abs{\CB(\ell)} $.
	
\end{lemma}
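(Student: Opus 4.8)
The plan is to prove the set equality $\CA = \varphi^{-1}(\CB)$ and then read off the displayed cardinality by counting the fibers of $\varphi$ layer by layer. The inclusion $\CA \subseteq \varphi^{-1}(\CB)$ is immediate from the hypothesis: if $A \in \CA$ then $\varphi(A) \in \varphi(\CA) \subseteq \CB$, so $A \in \varphi^{-1}(\varphi(A)) \subseteq \varphi^{-1}(\CB)$. (First one should check that $\varphi$ really maps $\binom{[n]_m}{k}$ into $\CP$: $\varphi(A) \ne \emptyset$ since $\abs{A} = k \gs 1$, and $\varphi(A) \ne [n]_1$ since $\abs{\varphi(A)} \ls k < k + q \ls n$.)

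The reverse inclusion $\varphi^{-1}(\CB) \subseteq \CA$ is where the maximality hypotheses come into play, through the elementary observation that for any $A_1, A_2 \in \binom{[n]_m}{k}$ one has $A_1 \cap A_2 = \emptyset$ if and only if $\varphi(A_1) \cap \varphi(A_2) = \emptyset$; this follows at once from the definition of multiset intersection, since $\min\cb{\mu_i^{(1)}, \mu_i^{(2)}} = 0$ precisely when at least one of the two multiplicities vanishes. Suppose, for contradiction, that $A \in \varphi^{-1}(\CB)$ but $A \notin \CA$. Since $\CA$ is maximal intersecting with respect to $\binom{[n]_m}{k}$, there is $A' \in \CA$ with $A \cap A' = \emptyset$, hence $\varphi(A) \cap \varphi(A') = \emptyset$. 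But $\varphi(A) \in \CB$ by assumption while $\varphi(A') \in \varphi(\CA) \subseteq \CB$, contradicting that $\CB$ is intersecting. Therefore $\varphi^{-1}(\CB) \subseteq \CA$, and combined with the previous paragraph, $\CA = \varphi^{-1}(\CB)$.

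It then remains to compute $\abs{\varphi^{-1}(\CB)}$. As $\varphi$ is a function, the fibers $\varphi^{-1}(B)$ for $B \in \CB$ are pairwise disjoint, so $\abs{\varphi^{-1}(\CB)} = \sum_{B \in \CB} \abs{\varphi^{-1}(B)}$. For $B$ with $\abs{B} = \ell$, a multiset $A \in \varphi^{-1}(B)$ is exactly the data of multiplicities $\mu_i$ for $i \in B$ with $1 \ls \mu_i \ls m$ and $\sum_{i \in B} \mu_i = k$; the number of such choices equals the coefficient of $x^k$ in $\pt{\sum_{i = 1}^{m} x^i}^{\ell}$, namely $C_{k, \ell}$, and so depends only on $\ell$. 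Grouping $\CB$ into its uniform parts gives $\abs{\varphi^{-1}(\CB)} = \sum_{\ell} C_{k, \ell} \cdot \abs{\CB(\ell)}$; by Lemma \ref{key}(i) the terms with $\ell < q$ or $\ell > k$ vanish, and since $n \gs k + q$ forces $k \ls n - q$, the range may be rewritten as $q \ls \ell \ls n - q$, which is the asserted formula.

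If anything here is the crux, it is the equivalence that two $k$-multisets are disjoint if and only if their supports are disjoint, which is what lets the two maximality hypotheses talk to each other through $\varphi$; but this is a one-line consequence of the definition of multiset intersection, so I do not anticipate a genuine obstacle. The remaining work --- checking $\varphi$ lands in $\CP$, the fiber count, and the cosmetic adjustment of the summation range by padding with zero coefficients --- is routine.
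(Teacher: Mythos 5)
Your proof is correct. Note that the paper itself gives no proof of this lemma --- it is imported verbatim from \cite{MR4717700} (Lemmas 3.3 and 3.4) --- so there is nothing to compare against in this document; your argument is the natural self-contained one. The two pillars you identify are exactly right: (1) two $k$-multisets are disjoint if and only if their supports are, which transfers maximality of $\CA$ in $\binom{[n]_m}{k}$ to intersection in $\CP$ and yields $\varphi^{-1}(\CB)\subseteq\CA$ (the easy inclusion $\CA\subseteq\varphi^{-1}(\CB)$ being immediate from $\varphi(\CA)\subseteq\CB$); and (2) the fiber over a set $B$ with $\abs{B}=\ell$ has size exactly $C_{k,\ell}$, since choosing $A\in\varphi^{-1}(B)$ amounts to choosing multiplicities in $[1,m]$ on $B$ summing to $k$. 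Your adjustment of the summation range using Lemma \ref{key}(i) and $k\ls n-q$ is also correct. You have in effect also repaired the statement's typographical conflation of the set equality $\CA=\varphi^{-1}(\CB)$ with the cardinality identity $\abs{\CA}=\sum_{\ell=q}^{n-q}C_{k,\ell}\cdot\abs{\CB(\ell)}$ by proving both separately.
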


\begin{lemma}\label{formula}
	
	Let $ \CA, \CX \subseteq \binom{[n]_m}{k} $ with $ \CA, \CX $ are maximal intersecting with respect to $ \binom{[n]_m}{k} $. Let $ \CB, \CY \subseteq \CP $ with $ \CB, \CY $ are maximal intersecting with respect to $ \CP $. If $ \varphi(\CA) \subseteq \CB $ and $ \varphi(\CX) \subseteq \CY $, then$$\abs{\CX} - \abs{\CA} = \sum_{\ell = q}^{w}\pt{C_{k, \ell} - C_{k, n - \ell}} \cdot \pt{\abs{\CY(\ell)} - \abs{\CB(\ell)}}.$$ where $ w := \min\cb{k, \fl{n/2}} $.	
	
\end{lemma}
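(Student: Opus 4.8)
The plan is to reduce everything to Lemma~\ref{relate} and then exploit the complementation symmetry of maximal intersecting families in $\CP$. Since $\CA$ is maximal intersecting with $\varphi(\CA)\subseteq\CB$, Lemma~\ref{relate} yields $\abs{\CA}=\sum_{\ell=q}^{n-q}C_{k,\ell}\abs{\CB(\ell)}$, and in the same way $\abs{\CX}=\sum_{\ell=q}^{n-q}C_{k,\ell}\abs{\CY(\ell)}$. Subtracting,
$$\abs{\CX}-\abs{\CA}=\sum_{\ell=q}^{n-q}C_{k,\ell}\,d(\ell),\qquad\text{where }d(\ell):=\abs{\CY(\ell)}-\abs{\CB(\ell)}.$$
Since $n\gs k+q$ we have $k\ls n-q$, so by Lemma~\ref{key}(i) every $C_{k,\ell}$ with $\ell\in[1,n-1]\setminus[q,n-q]$ vanishes, and the summation index can be extended to run over all $1\ls\ell\ls n-1$ without changing the value.

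The structural heart of the argument is the antisymmetry $d(n-\ell)=-d(\ell)$ for every $\ell\in[1,n-1]$. This follows from Lemma~\ref{MR4717700}(i): every complementary pair $\{B,B^c\}\subseteq\CP$ meets $\CB$ in exactly one set, which forces $\abs{\CB(\ell)}+\abs{\CB(n-\ell)}=\binom{n}{\ell}$ when $\ell\ne n-\ell$ and $\abs{\CB(n/2)}=\tfrac12\binom{n}{n/2}$ when $n$ is even; the same identities hold with $\CY$ in place of $\CB$, and subtracting cancels the binomials. Re-indexing the extended sum by $\ell\mapsto n-\ell$ and applying this gives $\abs{\CX}-\abs{\CA}=-\sum_{\ell=1}^{n-1}C_{k,n-\ell}\,d(\ell)$; averaging this with the original expression produces
$$2\pt{\abs{\CX}-\abs{\CA}}=\sum_{\ell=1}^{n-1}\pt{C_{k,\ell}-C_{k,n-\ell}}\,d(\ell).$$
The summand $g(\ell):=\pt{C_{k,\ell}-C_{k,n-\ell}}d(\ell)$ is invariant under $\ell\mapsto n-\ell$ (both factors flip sign), and $g(n/2)=0$ when $n$ is even; hence $\sum_{\ell=1}^{n-1}g(\ell)=2\sum_{\ell=1}^{\fl{n/2}}g(\ell)$, so $\abs{\CX}-\abs{\CA}=\sum_{\ell=1}^{\fl{n/2}}g(\ell)$.

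Finally I would trim the range down to $[q,w]$ with $w=\min\cb{k,\fl{n/2}}$. For $\ell<q$ one has $C_{k,\ell}=0$ and $C_{k,n-\ell}=0$ (because $n-\ell\gs n-q+1>k$), so $g(\ell)=0$; and when $w=k<\fl{n/2}$, for $k<\ell\ls\fl{n/2}$ one has $C_{k,\ell}=0$ and $C_{k,n-\ell}=0$ (because $n-\ell\gs\ce{n/2}>k$), so again $g(\ell)=0$. Therefore $\abs{\CX}-\abs{\CA}=\sum_{\ell=q}^{w}\pt{C_{k,\ell}-C_{k,n-\ell}}\pt{\abs{\CY(\ell)}-\abs{\CB(\ell)}}$, which is the asserted formula. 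I do not anticipate any real obstacle: the only points that need a moment's care are the self-paired index $\ell=n/2$ for even $n$ (whose term vanishes on its own because $C_{k,n/2}-C_{k,n-n/2}=0$) and the verification that the two endpoint truncations discard only zero terms, both of which are immediate from Lemmas~\ref{MR4717700} and~\ref{key}.
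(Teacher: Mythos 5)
Your proof is correct and follows essentially the same route as the paper: apply Lemma~\ref{relate} to both families, then use the complementation property of Lemma~\ref{MR4717700}(i) to re-index the upper half of the sum via $\ell\mapsto n-\ell$. You are in fact more careful than the paper's own write-up, which leaves the antisymmetry $\abs{\CY(n-\ell)}-\abs{\CB(n-\ell)}=-\pt{\abs{\CY(\ell)}-\abs{\CB(\ell)}}$, the self-paired index $\ell=n/2$, and the truncation from $\fl{n/2}$ down to $w$ implicit.
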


\begin{proof}
	
	Since $ C_{k, \ell} = \abs{\varphi^{-1}([\ell]_1)} $, we have
	\begin{align*}
		\abs{\varphi^{-1}(\CB)}
		&= \abs{\varphi^{-1}\pt{\biguplus_{\ell = 1}^{n - 1}\CB(\ell)}}\\
		&= \sum_{\ell = 1}^{n - 1} \abs{\varphi^{-1}(\varOmega_\ell^1)} \cdot \abs{\CB(\ell)}\\
		&= \sum_{\ell = q}^{k} C_{k, \ell} \cdot \abs{\CB(\ell)}.
	\end{align*}
	
	By Lemma \ref{relate}, we have
	\begin{align*}
		\abs{\CX} - \abs{\CA}
		&= \abs{\varphi^{-1}(\CY)} - \abs{\varphi^{-1}(\CB)}\\
		&= \sum_{\ell = q}^{n - q} C_{k, \ell} \cdot \pt{\abs{\CY(\ell)} - \abs{\CB(\ell)}}\\
		&= \sum_{\ell = q}^{\fl{n/2}} C_{k, \ell} \cdot \pt{\abs{\CY(\ell)} - \abs{\CB(\ell)}} + \sum_{\ell = \ce{n/2}}^{n - q} C_{k, \ell} \cdot \pt{\abs{\CY(\ell)} - \abs{\CB(\ell)}}\\
		&= \sum_{\ell = q}^{\fl{n/2}} C_{k, \ell} \cdot \pt{\abs{\CY(\ell)} - \abs{\CB(\ell)}} - \sum_{\ell = q}^{\fl{n/2}} C_{k, n - \ell} \cdot \pt{\abs{\CY(\ell)} - \abs{\CB(\ell)}}\\
		&= \sum_{\ell = q}^{w} \pt{C_{k, \ell} - C_{k, n - \ell}} \cdot \pt{\abs{\CY(\ell)} - \abs{\CB(\ell)}}.\qedhere
	\end{align*}	
	
\end{proof}

\begin{proof}[Proof of Theorem\ref{sec1_main_thm}]
	
	Let $ \CB \subseteq \CP $ with $ \CB $ is maximal intersecting with respect to $ \CP $ and $ \varphi(\CA) \subseteq \CB $. By Lemma \ref{key} (iv) and \ref{formula}, we have$$\abs{\CH_{n, k}^{m}} - \abs{\CA} = \sum_{\ell = q}^{w}\pt{C_{k, \ell} - C_{k, n - \ell}} \cdot \pt{\abs{\CV(\ell)} - \abs{\CB(\ell)}} \gs 0.$$ Moreover, by Lemma \ref{key}(ii), if one of (A) and (B) holds, then $ C_{k, q} > C_{k, n - q} $. Hence $ \abs{\CB(q)} = \abs{\CV(q)} $. By Lemma \ref{structure}, we have $ \CB^\star \cong \CV^\star $, and so $ \CA \cong \CH_{n, k}^{m} $.\qedhere
	
\end{proof}
\noindent{\bf Acknowledgement.} M. Cao is supported by the National Natural Science Foundation of China (Grant 12301431), M. Lu is supported by the National Natural Science Foundation of China (Grant 12171272 \& 12161141003).

\bibliographystyle{abbrv}
\bibliography{References}
\addcontentsline{toc}{chapter}{Bibliography}

\end{document}